\newtheorem{theorem}{Theorem}[section]
\newtheorem{lemma}[theorem]{Lemma}
\newcommand{\ten}[1]{\mathcal{#1}}
\newcommand{\fold}{\operatorname{fold}}
\newcommand{\SVT}{\operatorname{SVT}}
\newcommand{\prox}{\operatorname{prox}}
\newcommand{\eye}{\operatorname{eye}}
\newcommand{\St}{\operatorname{St}}
\newcommand{\diag}{\operatorname{diag}}
\newcommand{\sgn}{\operatorname{sgn}}
\newcommand{\vecc}{\operatorname{vec}}
\newcommand{\ser}[1]{\mathfrak{#1}}
\numberwithin{equation}{section}
\def \endprf{\hfill {\vrule height6pt width6pt depth0pt}\medskip}
\newenvironment{proof}{\noindent {\bf Proof} }{\endprf\par}
\title{Parallel Active Subspace Decomposition for Scalable and Efficient
Tensor Robust Principal Component Analysis}
\author{Jonathan Q. Jiang and Michael K. Ng\\
 \vspace{-.1cm}\\
  Department of Mathematics, Hong Kong Baptist University
}
\date{}
\begin{document}

\maketitle

\vspace{-0.3in}

\begin{abstract}
Tensor robust principal component analysis (TRPCA) has received a substantial amount of attention in various fields. Most existing methods, normally relying on tensor nuclear norm minimization, need to pay an expensive computational cost due to multiple singular value decompositions (SVDs) at each iteration. To overcome the drawback, we propose a scalable and efficient method, named Parallel Active Subspace Decomposition (PASD), which divides the unfolding along each mode of the tensor into a columnwise orthonormal matrix (active subspace) and another small-size matrix in parallel. Such a transformation leads to a nonconvex optimization problem in which the scale of nulcear norm minimization is generally much smaller than that in the original problem. Furthermore, we introduce an alternating direction method of multipliers (ADMM) method to solve the reformulated problem and provide rigorous analyses for its convergence and suboptimality. Experimental results on synthetic and real-world data show that our algorithm is more accurate than the state-of-the-art approaches, and is orders of magnitude faster.
\end{abstract}

{\bf Keywords.}  Tensor robust principal component analysis, low-rank tensors, nuclear norm minimization, active subspace decomposition, low-rank matrix factorization

\section{Introduction}
\label{sec1}

\noindent The prevalence of multidimensional data, such as multichannel images and videos,  in modern society, has revived our interest in the study for tensor decomposition, completion and recovery in last decade. Tensor, as higher-order generalization of vector and matrix, is able to take full advantage of the multilinear structure of the data and thus to provide better understanding and higher precision in signal processing~\cite{Cichocki2014}, computer vision~\cite{Tao2007,Liu2013}, data mining~\cite{Sun2009,Morup2011} and machine learning~\cite{Li2008,Signoretto2014}.

Multidimensional data analysis traditionally relies on tensor decomposition~\cite{Kolda2005}, which normally takes two popular forms, CANDECOMP/PARAFAC (CP) decomposition~\cite{Harshman1969} and Tucker decomposition~\cite{Tucker1966}. Originated in the fields of psychometrics and chemometrics, these decompositions are now used in a wide range of application areas (see~\cite{Kolda2005} for a comprehensive review). Owing to many factors, including the malfunctions in the acquisition process, loss of information, and expensive experiments, the multidimensional data is probably incomplete in many applications, which prevents both types of tensor decompositions from achieving satisfactory results. To address tensor data with missing values, two extended models called weighted Tucker~\cite{Filipovi2013} and weighted CP decomposition~\cite{Acar2010}, have been recently proposed and successfully applied to EEG data analysis and image inpainting.

In reality, the intrinsic structures of the real data have been found to be actually low-rank, even if themselves may not be. Unlike matrices, the rank of a specific tensor is NP-hard to estimate in general~\cite{Hastad2006} and there exists no explicit expression for its tightest convex envelop so far. In the seminar work~\cite{Liu2013}, the first convex approximation of tensor rank named tensor trace norm was given as a weighted combination of the trace norms of all matrices unfolded along each mode. Soon after, a large number of algorithms~\cite{Signoretto2010,Gandy2011,Yang2013,Signoretto2014} were proposed for the low-rank tensor completion (LRTC) problem, i.e., learning a low-rank tensor from partially observed data, on the basis of tensor trace norm minimization.

In this paper, we are particularly interested in another branch of the low-rank tensor recovery problem, namely Tensor Robust Principal Component Analysis (TRPCA).  
More precisely, we aim to split a noisy and fully observed tensor into a low-rank component that captures its underlying low-dimensional structure and a sparse component that contains the gross errors. This problem is essentially a tensor version of Robust Principal
Component Analysis (RPCA) in matrix case~\cite{Candes2011}. Compared to the LRTC problem, investigations of the TRPCA problem are relatively limited and only can be found in a few papers~\cite{Li2010,Tan2013,Goldfarb2013,Huang2015}. All the methods employed the tensor trace norm and tensor $\ell_1$ norm to enforce the low-rankness and the sparsity of the two components respectively and depended on an alternating direction method of multipliers (ADMM) scheme, which suffered from a heavy computational burden due to the multiple singular value decompositions (SVDs) conducted in each iteration.

To address this issue, we propose an efficient and scalable method called Parallel Active Subspace Decomposition (PASD) in this paper. It is quite interesting and innovative from the following perspectives.
\begin{itemize}
  \item Our PASD method simultaneously decomposes the unfolding along
each mode of the tensor into a columnwise orthonormal matrix, e.g., active subspace~\cite{Liu2012}, and another small-size matrix. The computational cost is significantly reduced, since the trace norms of the unfoldings are equivalently replaced by those of some smaller-size matrices.
  \item We introduce an effective and efficient ADMM algorithm to solve the nonconvex optimization problem, which seems particularly suitable for large-scale problems.
  \item We conduct rigorous analyses for the convergence and suboptimality of our algorithm.
  \item Experimental results show that our PASD method is much more accurate than the state-of-the-art approaches, especially when the Tucker rank is large, and is orders of magnitude faster.
\end{itemize}

We begin with a brief review of tensor basics and related works in Section~2. Section~3 gives our PASD model and its corresponding ADMM algorithm. In Section~4, we present the theoretical analyses for the convergence and suboptimality of our algorithm. Finally, we report experimental results in Section~5 and draw the conclusions in Section~6.

\section{Notations and Preliminaries}
\label{sec2}

Matrices are denoted by uppercase letters, e.g., $X$, and tensors by calligraphic letters, e.g., $\ten{X}$ throughout the paper.

\subsection{Tensor Basics}
\label{sec2:sub1}

The order of a tensor is the number of dimensions, also known as ways or modes.
Given a $N$-order tensor $\ten{X} \in \Re^{I_1 \times \cdots \times I_N}$, a fiber is a column vector defined by fixing every index of $\ten{X}$ but one. The
mode-$n$ unfolding or matricization is the matrix denoted by
$\ten{X}_{(n)} \in \Re^{I_n \times \prod_{m \neq n} I_m}$ that is obtained by arranging the mode-$n$ fibers to be the columns of the matrix. The mode-$n$ product of a tensor $\ten{X} \in \Re^{I_1 \times \cdots \times I_N}$ with a matrix $U \in \Re^{J \times I_n}$ is defined as $(\ten{X} \times_n U)_{i_1 \cdots i_{n-1}ji_{n+1} \cdots i_N} = \sum_{i_n = 1}^{I_n} x_{i_1 \cdots i_N} u_{j i_n}$. The inner product of two tensors $\ten{X}, \ten{Y} \in \Re^{I_1 \times \cdots \times I_N}$ is defined as the sum of the product of their entries, i.e., $\langle \ten{X}, \ten{Y}\rangle = \sum_{i_1 \cdots i_N} x_{i_1 \cdots i_N} y_{i_1 \cdots i_N}$, and the Frobenius norm of $\ten{X}$ is defined as $\|\ten{X}\|_F = \sqrt{\langle \ten{X}, \ten{X}\rangle}$. The $\ell_1$ norm and $\ell_{\infty}$ norm of a tensor $\ten{X}$ are defined by its vectorization, i.e, $\|\ten{X}\|_{1} = \|\vecc(\ten{X})\|_{1}$ and $\|\ten{X}\|_{\infty} = \|\vecc(\ten{X})\|_{\infty}$ respectively. The mode-$n$ rank of $\ten{X}$ is the column rank of $\ten{X}_{(n)}$. The set of $N$ mode-$n$ ranks $(r_1, \cdots, r_N)$ of a tensor $\ten{X}$ is called its multilinear rank or Tucker rank.

\subsection{Tensor Decompositions and Ranks}
\label{sec2:sub2}

The CP decomposition~\cite{Harshman1969} approximates a tensor as $\ten{X} \approx \sum_{i=1}^r \lambda_n \bm{a}^{(1)}_i \circ \bm{a}^{(2)}_i \circ \cdots \circ \bm{a}^{(N)}_i$ where $\circ$ stands for the outer product of two vectors, $\lambda_n \in \Re$ and $\bm{a}^{(n)}_i \in \Re^{I_n}$ for $i = 1, \cdots, r$ and $n = 1, \cdots, N$. The rank of $\ten{X}$ is the smallest value of $r$ such that the approximation holds with equality. The Tucker decomposition~\cite{Tucker1966} is another factorization that approximates a tensor as $\ten{X} \approx \ten{C} \times_1 U_1 \times_2 U_2 \cdots \times_N U_N$ where $\ten{C} \in \Re^{r_1 \times \cdots \times r_N}$ is the core tensor and $U_n \in \Re^{I_n \times r_n}, n = 1, \cdots, N$ are the factor matrices. The mode-$n$ rank of $\ten{X}$ is the column rank of $\ten{X}_{(n)}$. The set of $N$ mode-$n$ ranks $(r_1, \cdots, r_N)$ of a tensor $\ten{X}$ is called its multilinear rank or Tucker rank.

\subsection{Related work}

Given a tensor data $\ten{T}$, the TRPCA problem can be mathematically represented by
\begin{equation}
\min_{\ten{X},\,\ten{E}} \sum_{n=1}^N \lambda_n\|\ten{X}_{(n)}\|_{\ast}+\|\ten{E}\|_{1},\quad \textup{s.t.},\, \ten{T} = \ten{X} + \ten{E},
\label{eq1}
\end{equation}
where $\ten{X}$ and $\ten{E}$ are the low-rank and sparse components, $\|\ten{X}_{(n)}\|_{\ast}$ denotes the trace norm of the unfolding $\ten{X}_{(n)}$, i.e., the sum of its singular values, and $\{\lambda_n\}$ are prespecified weights. Note that problem (\ref{eq1}) is very difficult to solve because of the interdependent matrix trace norm terms.

The rank sparsity tensor decomposition (RSTD) algorithm~\cite{Li2010} applies variable-splitting to both $\ten{X}$ and $\ten{E}$, and utilizes a classic Block Coordinate Descent (BCD) approach to solve an unconstrained problem obtained by relaxing all the constraints as quadratic penalty terms. However, this method has many parameters to tune and does not have a iteration complexity guarantee. The Multi-linear Augmented Lagrange Multiplier (MALM) Method~\cite{Tan2013} is based on the ADMM algorithm and decomposes problem (\ref{eq1}) into $N$ independent standard RPCA instances. This relaxation makes the final solution hard to be optimal since consistency among the auxiliary variables is not considered. The Higher-order RPCA (HoRPCA) approach~\cite{Goldfarb2013} is also an ADMM method that conducts variable-splitting on $\ten{X}$ purely and reformulates problem (\ref{eq1}) as
\begin{align}
\label{eq2}\min_{\ten{Z}_n, \ten{E}} & \sum_{n=1}^N \lambda_n\|\ten{Z}_{n, (n)}\|_{\ast}+\|\ten{E}\|_{1}, \\
\textup{s.t.}, &\,\,\, \ten{T} = \ten{Z}_n + \ten{E},\,\,\,\forall n \in \mathds{N} \nonumber
\end{align}
where $\{\ten{Z}_n\}$ are the auxiliary variables and $\mathds{N}$ is the index set $\{1, 2, \cdots, N\}$. Note that equality among the $\ten{Z}_n$s is enforced implicitly by the constraints, so that additional auxiliary variables for $\ten{E}$ as in~\cite{Li2010,Tan2013} are not required.
Unfortunately, all the three approaches involve multiple SVDs of the unfoldings in each iteration, and thus are prone to suffer from expensive computational cost when the scale of the TRPCA problem is very large.

\section{Our Method}
\label{sec3}

In this section, we first introduce the PASD model for problem (\ref{eq2}) and then propose an efficient ADMM iterative scheme to solve the new nonconvex optimization problem.

\subsection{Parallel Active Subspace Decomposition}
\label{sec3:sub1}

It is well-known that matrix factorization is one of the most useful tools in high-dimensional data analysis, on account of its high accuracy, scalability and flexibility to incorporating side information. Given a large-size matrix can be approximated by the product of two matrices with much smaller size. Inspired by the previous work~\cite{Liu2012}, we decompose the unfoldings in problem (\ref{eq2}) as
\begin{equation}
\ten{Z}_{n, (n)} =  U_n V_n, \quad \textup{s.t.}, U_n \in \St(I_n, R_n), \,\,\, \forall n \in \mathds{N}\nonumber
\end{equation}
where $\St(I_n, R_n)$ denotes the Stiefel manifold, i.e., the set of columnwise orthonormal matrices of size $I_n \times R_n$, and $r_n \leq R_n \ll I_n$ is a given upper bound on the mode-$n$ rank of $\ten{T}$. The matrices $\{U_n\}$ are called active subspaces in~\cite{Liu2012}, since the underlying principle behind such a decomposition is similar to the famous active set~\cite{Nocedal2006}. Due to the orthonormality of $U_n$s, we have that $\|\ten{Z}_{n, (n)}\|_{\ast} = \|U_n V_n\|_{\ast} = \|V_n\|_{\ast}, \forall n \in \mathds{N}$, and problem (\ref{eq2}) can be rewritten as
\begin{align}
\label{eq3}\min_{U_n, V_n, \ten{E}} & \,\,\, \sum_{n=1}^N \lambda_n\|V_n\|_{\ast}+\|\ten{E}\|_{1}, \\
\textup{s.t.}, & \, \ten{T} = \fold_n(U_n V_n) + \ten{E}, \,U_n \in \St(I_n, R_n), \,\forall n \in \mathds{N}, \nonumber
\end{align}
where $\fold_n(A)$ returns the tensor $\ten{A}$ such that $\ten{A}_{(n)} = A$. In (\ref{eq3}), we carry out the active subspace decomposition along all modes in parallel and this is the main reason why we name our method. Note that there is a related work~\cite{Xu2015} which also makes use of parallel matrix factorization. But our study departs from it on the following two fronts.
The problem considered in that work is obviously of different nature. That work mainly concentrated on the LRTC problem while ours focuses on the TRPCA problem.
Besides, the standard low-rank matrix factorizations were used in that work, with no orthonormal constraint. More importantly, the trace norms are preserved in problem (\ref{eq3}), which has been shown to be very helpful to the robustness of algorithms against outliers and non-Gaussian noise~\cite{Liu2012,Okutomi2012,Cabral2013}.

\subsection{ADMM Algorithm}
\label{sec3:sub2}

The ADMM method is very efficient for some convex or non-convex programming
problems from various applications~\cite{Boyd2011}. Therefore, we propose an
ADMM algorithm to solve problem (\ref{eq3}).

The partial augmented Lagrangian function for problem (\ref{eq3}) is given by
\begin{align}
& \mathcal{L}_{\mu} (U_1 \cdots U_N, V_1 \cdots V_N, \ten{E}, \ten{Y}_1 \cdots \ten{Y}_N) \nonumber\\
  & = \sum_{n=1}^N \Big(\lambda_n \|V_n\|_{\ast} + \langle \ten{Y}_n, \ten{T} - \fold_n(U_n V_n) - \ten{E}\rangle \nonumber\\
& \,\,\,+ \frac{\mu}{2}\|\ten{T} - \fold_n(U_n V_n) - \ten{E}\|^2_F\Big) + \|\ten{E}\|_{1},
\label{eq4}
\end{align}
where $\ten{Y}_n, \forall n \in \mathds{N}$ are the tensors of Lagrange multipliers and $\mu$ is the penalty parameter. We give an iterative scheme to minimize $\mathcal{L}_{\mu}$ with respect to $\{U_n\}$, $\{V_n\}$, $\ten{E}$ successively.

By removing the terms irrelevant to $U_n$ and adding some
proper terms independent on $U_n$, problem (\ref{eq4}) with respect to $U_n$ can be simplified as
\begin{equation}
\min_{U_n} \|U_n V^k_n - G^k_n\|^2_F, \quad \textup{s.t.},\, U_n \in \textup{St}(I_n, R_n),
\label{eq9}
\end{equation}
where $G^k_n = \ten{T}_{(n)} - \ten{E}^k_{(n)} + {\ten{Y}^k_{n, (n)}}/{\mu^k}$. This is actually the well-known orthogonal procrustes problem~\cite{Higham1995}. Suppose the SVD of the matrix $G^k_n (V^k_n)^T$ is $G^k_n (V^k_n)^T = \widehat{U^k_n} \widehat{\Sigma^k_n} (\widehat{V^k_n})^T$, and the optimal solution can
be given by
\begin{equation}
U^{k+1}_n = \widehat{U^k_n} (\widehat{V^k_n})^T.
\label{eq10}
\end{equation}

By the similar way, problem (\ref{eq4}) with respect to $V_n$ can be reformulated as
\begin{equation}
\min_{V_n} \lambda_n \|V_n\|_{\ast} + \frac{\mu^k}{2}\|U^{k+1}_nV _n - G^k_n\|^2_F.
\label{eq11}
\end{equation}
Considering that $U^{k+1}_n \in \St(I_n, R_n)$, problem~(\ref{eq10}) is equivalent to
\begin{equation}
\min_{V_n} \lambda_n \|V_n\|_{\ast} + \frac{\mu^k}{2}\|V _n - (U^{k+1}_n)^TG^k_n\|^2_F,
\label{eq12}
\end{equation}
which has a closed-form solution
\begin{equation}
V^{k+1}_n = \SVT_{\frac{\lambda_n}{\mu^k}}\Big(({U^{k+1}_n})^T G^k_n\Big).
\label{eq13}
\end{equation}
The singular value thresholding (SVT) operator is defined
by $\SVT_\tau(X) = U\diag(\max(\Sigma - \tau,0))V^T$ where the SVD of matrix $X$ is $X = U \Sigma V^T$ and $\max(\cdot,\cdot)$ should be understood element-wise.

Fixing $\{U_n\}$ and $\{V_n\}$, we can update $\ten{E}$ by solving
\begin{equation}
\min_{\ten{E}} \|\ten{E}\|_1 + \frac{\mu^k}{2}\sum_{n=1}^N\|\ten{E} - \ten{H}^k_n\|^2_F,
\label{eq14}
\end{equation}
where $\ten{H}^k_n = \ten{T} - \fold_n(U^{k+1}_n V^{k+1}_n) + \ten{Y}^k_n/\mu^k$.
As indicated in~\cite{Goldfarb2013}, problem (\ref{eq13}) has the following closed-form solution
\begin{equation}
\ten{E}^{k+1} = \prox_{\frac{1}{\mu^k N}}\left(\frac{1}{N}\sum_{n=1}^N\ten{H}^k_n\right),
\label{eq15}
\end{equation}
where $\prox_\tau(\cdot)$ denotes the shrinkage operator, namely, $\prox_\tau(x) =
\sgn(x)\max(|x| - \tau,0)$.

Summarizing the above analysis, we obtain an ADMM
algorithm for problem (\ref{eq3}), as outlined in Algorithm \ref{alg1}. Note that the algorithm can be further accelerated by adaptively changing $\mu$ in each iteration (see line 7 in Algorithm~\ref{alg1}).

\begin{algorithm}[!t]
\caption{PASD: Solving (\ref{eq3}) via ADMM}
\label{alg1}
\textbf{Input:} $\ten{T}$, $(R_1, \cdots, R_n)$, $\lambda$ and $\varepsilon$.\\
\textbf{Initialize:} $U^0_n = \eye(I_n, R_n)$, $V^0_n = 0$, $\ten{Y}^0_n = 0$, $n = 1, \cdots, N$, $\ten{E}^0 = 0$, $\mu^0 = 10^{-4}$, $\mu_{\max} = 10^{10}$ and $\rho = 1.1$.\\\vspace{-0.4cm}
\begin{algorithmic}[1]
\WHILE {not converged}
\STATE {Update $U^{k+1}_n$ by (\ref{eq10}).}
\STATE {Update $V^{k+1}_n$ by (\ref{eq13}).}
\STATE {Update $\ten{E}^{k+1}$ by (\ref{eq15}).}
\STATE {Compute $\ten{Z}^{k+1}_n = \fold_n(U^{k+1}_nV ^{k+1}_n)$.}
\STATE {Update the multipliers $\ten{Y}^{k+1}_n$ by\\ $\ten{Y}^{k+1}_n=\ten{Y}^{k}_n+\mu^{k}(\ten{T}-\ten{Z}^{k+1}_n-\ten{E}^{k+1})$.}
\STATE {Update $\mu^{k+1}$ by $\mu^{k+1}=\textup{min}(\rho\mu^{k},\,\mu_{\max})$.}
\STATE {Check the convergence condition, \\
$\|\ten{T}-\ten{Z}^{k+1}_n-\ten{E}^{k+1}\|_{\infty} <\varepsilon, \,\,\, \forall n \in \mathds{N}$.}
\ENDWHILE
\end{algorithmic}
\textbf{Output:} $\ten{X} = \frac{1}{\sum_{n=1}^N \alpha_n}\sum_{n=1}^N \alpha_n \ten{Z}_n$.
\end{algorithm}

\section{Theoretical Analysis}
\label{sec4}

In this section, we will provide complexity analysis for Algorithm~\ref{alg1} and present its several theoretical properties.

\subsection{Complexity Analysis}
\label{sec4:sub1}

The running time of Algorithm~\ref{alg1} is dominated by conducting SVD on much smaller matrices of sizes $I_n \times R_n$  and $R_n \times \prod_{m \neq n} I_m, n \in \mathds{N}$. The time complexity of performing SVD in (\ref{eq10}) and in (\ref{eq13}) are $O(R^2_n I_n)$ and $O(R^2_n \prod_{m \neq n}I_m)$, respectively. The time complexity of some matrix multiplications is $O(R_n \prod_n I_n)$. Therefore, the total time complexity of Algorithm~\ref{alg1} is $O(T\sum_n (R^2_n I_n + R^2_n \prod_{m \neq n}I_m + R_n \prod_n I_n))$ where $T$ is the number of iterations. Without loss of generality, we assume that the time complexity of Algorithm~\ref{alg1} in each iteration is only $O(NRI^N)$ provided that the sizes of the input tensors are $I_1 = \cdots = I_n = I$ and the given ranks are $R_1 = \cdots = R_n = R$ ($R \ll I$). Recall that the complexities of most existing approaches, e.g. MALM~\cite{Tan2013}, SNN~\cite{Huang2015} and HoRPCA~\cite{Goldfarb2013}, in each iteration are all $O(NI^{N+1})$. Thus, our PASD method is much more efficient, as shown in the experiments later.

\subsection{Convergence Analysis}

Next, we check the convergence of our proposed algorithm. In fact, Algorithm~\ref{alg1} can stop within a finite number of iterations, as shown in the following theorem.

\begin{theorem}
Let $(\{U^k_1 \cdots U^k_N\}, \{V^k_1 \cdots V^k_N\}, \ten{E}^k)$ be a sequence generated by Algorithm~\ref{alg1}, then we have that
\begin{description}
  \item[(I)] The sequences $\{V^k_n\}$, $\{U^k_n V^k_n\}, \forall n \in \mathds{N}$ and $\{\ten{E}^k\}$ are Cauchy sequences respectively.
  \item[(II)] $(U^k_n, V^k_n, \ten{E}^k)$ is a feasible solution to problem (\ref{eq3}) in a sense that
  \begin{displaymath}
  \lim\limits_{k \rightarrow \infty} \| \ten{T} - \fold_n(U^k_n V^k_n) - \ten{E}^k\|_{\infty}
  < \varepsilon, \, \forall n \in \mathds{N}.
  \end{displaymath}
\label{the1}
\end{description}
\end{theorem}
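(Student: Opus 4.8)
The backbone of the argument is the multiplier recursion in line~6 of Algorithm~\ref{alg1}, which rearranges to
\[
\ten{T} - \fold_n(U^{k+1}_n V^{k+1}_n) - \ten{E}^{k+1} = \frac{1}{\mu^k}\big(\ten{Y}^{k+1}_n - \ten{Y}^k_n\big), \quad \forall n \in \mathds{N}.
\]
Both parts of the theorem reduce to controlling the right-hand side, so the plan is: (i) prove the multiplier sequences $\{\ten{Y}^k_n\}$ are bounded; (ii) use the geometric growth of $\mu^k$ to drive the residual to zero, giving Part~(II); and (iii) relate the increments of the primal variables to these vanishing residuals to obtain the Cauchy property in Part~(I). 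Parts~(I) and~(II) are thus not independent: once the multipliers are bounded and $\mu^k$ is pushed up, the residual identity governs everything else.

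For the boundedness in step~(i) I would read off the first-order optimality conditions of the three subproblems. The $\ten{E}$-update~(\ref{eq15}) yields $\sum_{n} \ten{Y}^{k+1}_n \in \partial\|\ten{E}^{k+1}\|_1$, so $\|\sum_n \ten{Y}^{k+1}_n\|_\infty \le 1$ uniformly in $k$. The $V_n$-update~(\ref{eq13}), together with $(U^{k+1}_n)^TU^{k+1}_n = I$, shows that $(U^{k+1}_n)^T\ten{Y}^{k+1}_{n,(n)}$ equals a subgradient of $\lambda_n\|\cdot\|_{\ast}$ at $V^{k+1}_n$ (spectral norm $\le \lambda_n$) plus a correction term, which pins down the component of $\ten{Y}^{k+1}_n$ in the range of $U^{k+1}_n$. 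Combining the two conditions bounds $\{\ten{Y}^k_n\}$. Since $\|U^k_n\|_F = \sqrt{R_n}$ automatically, I would then bound $\{V^k_n\}$ and $\{\ten{E}^k\}$ through the augmented Lagrangian: block-wise minimization gives $\mathcal{L}_{\mu^k}(\cdot^{k+1},\ten{Y}^k) \le \mathcal{L}_{\mu^k}(\cdot^{k},\ten{Y}^k)$, while the multiplier and penalty updates inflate the value by only $\sum_n \tfrac{\mu^{k+1}+\mu^k}{2(\mu^k)^2}\|\ten{Y}^{k+1}_n-\ten{Y}^k_n\|_F^2$, a summable quantity once the multipliers are bounded and $\mu^k$ grows geometrically. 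Hence $\mathcal{L}_{\mu^k}(\cdot^k,\ten{Y}^k)$ stays bounded and dominates the nonnegative norm terms.

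With these bounds in hand Part~(II) is immediate: $\|\ten{T} - \fold_n(U^{k+1}_nV^{k+1}_n) - \ten{E}^{k+1}\|_\infty \le \tfrac{2}{\mu^k}\sup_j\|\ten{Y}^j_n\|_F \to 0$, so the limiting residual is zero (a fortiori $<\varepsilon$) and the stopping test on line~8 is met after finitely many iterations. For the Cauchy property in Part~(I), I would revisit the telescoped Lagrangian to conclude that the per-step increments vanish, then use the subproblem optimality conditions to transfer this to $\|V^{k+1}_n - V^k_n\|_F \to 0$, $\|\ten{E}^{k+1}-\ten{E}^k\|_F \to 0$, and $\|U^{k+1}_nV^{k+1}_n - U^k_nV^k_n\|_F \to 0$; upgrading ``differences tend to zero'' to genuinely Cauchy exploits the geometric decay $1/\mu^k = (\mu^0)^{-1}\rho^{-k}$, which makes the increments summable.

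The hard part will be step~(i), the boundedness of the \emph{individual} multipliers $\ten{Y}^k_n$. The $\ten{E}$-subproblem only constrains the sum $\sum_n\ten{Y}^{k+1}_n$, and the $V_n$-subproblem only constrains the projection $(U^{k+1}_n)^T\ten{Y}^{k+1}_{n,(n)}$ onto the current active subspace, and even that only up to a term carrying a factor $\mu^k(\ten{E}^k-\ten{E}^{k+1})$ that must be shown not to blow up. Decoupling the multipliers and controlling the component of $\ten{Y}^{k+1}_n$ orthogonal to $\operatorname{range}(U^{k+1}_n)$ is delicate, because the bilinear factorization $U_nV_n$ together with the Stiefel constraint makes the problem nonconvex, so standard boundedness lemmas for convex ADMM do not apply directly. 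A secondary caveat is the cap $\mu_{\max}$: the residual-to-zero argument really uses the geometric-growth phase, and once the cap is active one only obtains a residual of order $1/\mu_{\max}$ — which is the honest reason Part~(II) is stated as an inequality with $\varepsilon$ rather than as exact feasibility.
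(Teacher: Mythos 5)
Your outline follows the same route as the paper's proof: the residual identity $\ten{T} - \fold_n(U^{k+1}_nV^{k+1}_n) - \ten{E}^{k+1} = (\ten{Y}^{k+1}_n - \ten{Y}^k_n)/\mu^k$, boundedness of the multipliers via the subproblems' first-order optimality conditions, the telescoped augmented Lagrangian with the summable inflation term $\sum_k\tfrac{\mu^{k-1}+\mu^k}{2(\mu^{k-1})^2}<\infty$, and the geometric decay of $1/\mu^k$ to obtain both feasibility and the Cauchy property. However, the step you yourself flag as delicate --- controlling the component of the mode-$n$ multiplier orthogonal to $\operatorname{range}(U^{k+1}_n)$ --- is left open, and it is precisely the nontrivial content of the paper's Lemma~\ref{lem1}. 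The paper closes it by introducing two auxiliary multiplier sequences, $\hat{\ten{Y}}^{k+1}_n = \ten{Y}^k_n + \mu^k(\ten{T}-\fold_n(U^{k+1}_nV^{k+1}_n)-\ten{E}^k)$ and $\bar{\ten{Y}}^{k+1}_n = \ten{Y}^k_n + \mu^k(\ten{T}-\fold_n(U^{k+1}_nV^{k}_n)-\ten{E}^k)$, and exploiting three facts: (a) the Procrustes optimality of $U^{k+1}_n$ gives $\|\bar{\ten{Y}}^{k+1}_n\|_F \le \|(1+\rho)\ten{Y}^k_n - \rho\ten{Y}^{k-1}_n\|_F$, so $\{\bar{\ten{Y}}^k_n\}$ is bounded once $\{\ten{Y}^k_n\}$ is; (b) $\hat{\ten{Y}}^{k+1}_{n,(n)}$ and $\bar{\ten{Y}}^{k+1}_{n,(n)}$ differ by $\mu^k U^{k+1}_n(V^k_n-V^{k+1}_n)$, which lies in $\operatorname{range}(U^{k+1}_n)$, so their orthogonal-complement components coincide and the troublesome component inherits the bound from (a); (c) the $V_n$-subproblem's optimality condition bounds $(U^{k+1}_n)^T\hat{\ten{Y}}^{k+1}_{n,(n)}$ by $\lambda_n$ in the dual (spectral) norm. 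Without some such device your worry about the correction term carrying a factor $\mu^k(\ten{E}^k-\ten{E}^{k+1})$ is never answered, and the boundedness claim --- hence everything downstream --- remains unproven. Note also that the Cauchy step is not merely ``increments tend to zero plus geometric decay'' in the abstract: the paper writes each increment explicitly as a bounded auxiliary multiplier divided by $\mu^k$, e.g.\ $V^{k+1}_n - V^k_n = (U^{k+1}_n)^T(\bar{\ten{Y}}^{k+1}_{n,(n)} - \hat{\ten{Y}}^{k+1}_{n,(n)})/\mu^k$ and $\ten{E}^{k+1}-\ten{E}^k = (\hat{\ten{Y}}^{k+1}_n - \ten{Y}^{k+1}_n)/\mu^k$, which is what makes the summability honest.

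Two of your side remarks deserve credit. First, your observation that the $\ten{E}$-subproblem only bounds the sum $\sum_n\ten{Y}^{k+1}_n$ rather than each $\ten{Y}^{k+1}_n$ individually is a real issue: the paper's own Lemma~\ref{lem1} makes the same leap from the bound on the sum to boundedness of the individual sequences without further justification. Second, your caveat about $\mu_{\max}$ is correct: the proofs of both parts use $\mu^k\to\infty$ and $\sum_k 1/\mu^k<\infty$, which hold only if the cap is never reached; once it binds, the residual is only of order $1/\mu_{\max}$ and summability of the increments is lost, so the theorem as proved implicitly treats $\mu_{\max}$ as never attained.
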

The proof of Theorem~\ref{the1} is quite similar to those in~\cite{Lin2009,Liu2012}. We
first introduce two additional groups of auxiliary Lagrangian multipliers,
\begin{align}
\label{eq16} \hat{\ten{Y}}^{k+1}_n &= \ten{Y}^{k}_n + \mu^k (\ten{T} - \fold_n(U^{k+1}_n V^{k+1}_n) - \ten{E}^{k}),\\
\label{eq17} \bar{\ten{Y}}^{k+1}_n & = \ten{Y}^{k}_n + \mu^k (\ten{T} - \fold_n(U^{k+1}_n V^{k}_n) - \ten{E}^{k}),
\end{align}
for $n \in \mathds{N}$ and study the boundedness of them as well as some variables in Algorithm~\ref{alg1}, which are summarized in the following lemma.
\begin{lemma}
The sequences $\{\ten{Y}^k_n\}$, $\{\hat{\ten{Y}}^k_n\}$, $\{\bar{\ten{Y}}^k_n\}$, $\{\ten{E}^k\}$, $\{V^k_n\}$, and $\{U^k_n V^k_n\}$, $\forall n \in \mathds{N}$ are all bounded.
\label{lem1}
\end{lemma}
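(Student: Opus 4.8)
The plan is to follow the template of the boundedness arguments in \cite{Lin2009,Liu2012}, adapting them to the parallel, multi-constraint structure of \eqref{eq3}. The first observation is that the $U_n$-iterates need no separate treatment: since each $U^k_n$ lies on the Stiefel manifold $\St(I_n,R_n)$, which is compact, the sequence $\{U^k_n\}$ is automatically bounded, and by orthonormality $\|U^k_n V^k_n\|_F = \|V^k_n\|_F$. Hence it suffices to bound $\{\ten{Y}^k_n\}$, $\{\ten{E}^k\}$ and $\{V^k_n\}$: boundedness of $\{U^k_n V^k_n\}$ is then immediate, and because $\mu^k\le\mu_{\max}$ is capped in Algorithm~\ref{alg1}, the two auxiliary multiplier sequences $\{\hat{\ten{Y}}^k_n\}$ and $\{\bar{\ten{Y}}^k_n\}$ of \eqref{eq16}--\eqref{eq17} are bounded as soon as $\{\ten{Y}^k_n\}$, $\{\ten{E}^k\}$ and $\{V^k_n\}$ are, directly from their defining formulas.

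Next I would extract multiplier bounds from the first-order optimality conditions of the block subproblems. Writing the optimality condition of the $\ten{E}$-subproblem \eqref{eq14} and substituting the multiplier update of Algorithm~\ref{alg1} gives $\sum_{n=1}^N \ten{Y}^{k+1}_n \in \partial\|\ten{E}^{k+1}\|_1$, so $\|\sum_n \ten{Y}^{k+1}_n\|_\infty \le 1$ because every subgradient of $\|\cdot\|_1$ lies in the $\ell_\infty$ unit ball. Likewise the optimality condition of the $V_n$-subproblem \eqref{eq12}, together with $(U^{k+1}_n)^{T}U^{k+1}_n=I$, yields $(U^{k+1}_n)^{T}\hat{\ten{Y}}^{k+1}_{n,(n)} \in \lambda_n\,\partial\|V^{k+1}_n\|_*$, whose spectral norm is at most $\lambda_n$ since subgradients of the nuclear norm lie in the spectral-norm unit ball. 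Thus the aggregate $\sum_n\ten{Y}^{k+1}_n$ and the projection of each $\hat{\ten{Y}}^{k+1}_n$ onto the column space of $U^{k+1}_n$ are controlled.

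I would then bound the primal iterates through the augmented Lagrangian \eqref{eq4}. Since each block update minimizes $\mathcal{L}_{\mu^k}$ in one variable with the others fixed (and, across modes, $U_n$ and $V_n$ enter only the $n$-th summand), $\mathcal{L}_{\mu^k}$ is nonincreasing over the three primal updates; accounting for the multiplier and penalty updates yields the usual estimate in which $\mathcal{L}_{\mu^{k+1}}$ at step $k+1$ exceeds $\mathcal{L}_{\mu^k}$ at step $k$ by a term proportional to $\tfrac{\mu^{k+1}+\mu^k}{(\mu^k)^2}\sum_n\|\ten{Y}^{k+1}_n-\ten{Y}^k_n\|_F^2$. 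Once $\mathcal{L}_{\mu^k}$ is shown bounded above, completing the square in each summand (turning the inner-product plus quadratic penalty into $\tfrac{\mu^k}{2}\|\cdot+\ten{Y}^k_n/\mu^k\|_F^2-\tfrac{1}{2\mu^k}\|\ten{Y}^k_n\|_F^2$, with the subtracted term bounded because $\mu^k\ge\mu^0>0$ and $\ten{Y}^k_n$ is bounded) leaves a sum of nonnegative terms $\lambda_n\|V^k_n\|_*$ and $\|\ten{E}^k\|_1$ bounded above, forcing $\{V^k_n\}$ and $\{\ten{E}^k\}$ to be bounded. Feeding this back closes the loop on $\{\ten{Y}^k_n\}$ and hence on $\{\hat{\ten{Y}}^k_n\}$, $\{\bar{\ten{Y}}^k_n\}$.

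I expect the main obstacle to be bounding the \emph{individual} multipliers $\ten{Y}^k_n$ rather than their sum. Because the single sparse tensor $\ten{E}$ is shared by all $N$ constraints, the $\ten{E}$-optimality controls only $\sum_n\ten{Y}^{k+1}_n$, while the $V_n$-optimality controls only the part of $\hat{\ten{Y}}^{k+1}_n$ lying in the column space of $U^{k+1}_n$; the component in the orthogonal complement is not pinned down by any single subproblem, and the $U_n$-update, being a Procrustes problem on a manifold, supplies a stationarity condition rather than a dual-ball bound. A second complication is that, unlike \cite{Lin2009,Liu2012} where $\mu^k\to\infty$ makes $\sum_k 1/\mu^k$ finite and the telescoped Lagrangian estimate summable, here $\mu^k$ saturates at $\mu_{\max}$, so the tail of that estimate is not summable and the step ``$\mathcal{L}_{\mu^k}$ bounded above'' cannot be obtained by the same device; one must instead control the residuals $\ten{T}-\ten{Z}^{k+1}_n-\ten{E}^{k+1}$ (equivalently $\ten{Y}^{k+1}_n-\ten{Y}^k_n$) directly in the eventually-constant-penalty regime. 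Overcoming both points calls for a simultaneous, bootstrapping argument that couples the per-mode optimality conditions with the boundedness of $\ten{E}$, rather than the purely sequential bounding that suffices in the single-constraint matrix case.
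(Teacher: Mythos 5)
Your skeleton matches the paper's: the dual bound $\|\sum_n \ten{Y}^{k+1}_n\|_\infty \le 1$ from the $\ten{E}$-subproblem, the spectral bound $\|(U^{k+1}_n)^T\hat{\ten{Y}}^{k+1}_{n,(n)}\|\le\lambda_n$ from the $V_n$-subproblem, and the telescoped descent of $\mathcal{L}_{\mu^k}$ to bound $\{V^k_n\}$ and $\{\ten{E}^k\}$ are exactly the paper's steps. But the proposal has a genuine gap at the two multiplier sequences $\{\hat{\ten{Y}}^k_n\}$ and $\{\bar{\ten{Y}}^k_n\}$. Your shortcut --- ``they are bounded directly from \eqref{eq16}--\eqref{eq17} because $\mu^k\le\mu_{\max}$'' --- is unusable here: the surrounding analysis (Theorem~\ref{the1} and the Cauchy-sequence argument) requires $\mu^k\to\infty$, and the paper's proofs treat $\mu^k=\rho^k\mu^0$ as unbounded (this is what makes $\sum_k \frac{\mu^{k-1}+\mu^k}{2(\mu^{k-1})^2}=\frac{\rho(\rho+1)}{2\mu^0(\rho-1)}$ finite). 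A bound on $\hat{\ten{Y}}^k_n$, $\bar{\ten{Y}}^k_n$ that scales with $\mu_{\max}$ defeats the later steps in which these quantities are divided by $\mu^k$. Moreover, your bound would also need $\{V^k_n\}$ and $\{\ten{E}^k\}$ bounded first, whereas the paper needs $\{\hat{\ten{Y}}^k_n\}$ bounded uniformly in $\mu^k$ regardless.

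The two ingredients you flag as ``main obstacles'' and leave unresolved are precisely what the paper supplies. First, $\bar{\ten{Y}}^{k+1}_n$ is bounded not by a stationarity condition but by comparing objective values in the Procrustes problem \eqref{eq9}: since $U^{k+1}_n$ minimizes $\|U_nV^k_n-G^k_n\|_F$ over the Stiefel manifold, evaluating at the feasible competitor $U^k_n$ gives $\|\bar{\ten{Y}}^{k+1}_n\|_F\le\|\ten{Y}^k_n+\mu^k(\ten{T}-\fold_n(U^k_nV^k_n)-\ten{E}^k)\|_F=\|(1+\rho)\ten{Y}^k_n-\rho\ten{Y}^{k-1}_n\|_F$, which is bounded once $\{\ten{Y}^k_n\}$ is. Second, the orthogonal-complement component of $\hat{\ten{Y}}^{k+1}_{n,(n)}$ \emph{is} pinned down: $\hat{\ten{Y}}^{k+1}_{n,(n)}-\bar{\ten{Y}}^{k+1}_{n,(n)}=-\mu^kU^{k+1}_n(V^{k+1}_n-V^k_n)$ lies in the range of $U^{k+1}_n$, so $\bigl((U^{k+1}_n)^{\perp}\bigr)^T\hat{\ten{Y}}^{k+1}_{n,(n)}=\bigl((U^{k+1}_n)^{\perp}\bigr)^T\bar{\ten{Y}}^{k+1}_{n,(n)}$ is controlled by the previous step, while the range component is controlled by the dual ball of the nuclear norm; together these bound $\hat{\ten{Y}}^{k+1}_n$ independently of $\mu^k$. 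Without these two steps your argument does not close. (Your remaining two objections are fair criticisms of the paper itself rather than gaps you must fill: the $\ten{E}$-optimality only bounds $\sum_n\ten{Y}^{k+1}_n$, yet the paper immediately asserts each $\ten{Y}^k_n$ is bounded; and the cap $\mu_{\max}$ in Algorithm~\ref{alg1} is silently ignored throughout the analysis.)
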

We can then use this lemma to prove Theorem~\ref{the1}. The detailed proof is given in Appendix \ref{appA} and \ref{appB}.

\subsection{Suboptimality Analysis}
\label{sec4:sub3}

In this subsection, we attempt to show that it is possible to prove the local optimality of the solution produced by Algorithm~\ref{alg1}. In other words, we want to investigate the gap between the true minimum and the minimal value of the objective function achieved by our proposed algorithm.

Let $k^{\ast}$ be the number of iterations when Algorithm~\ref{alg1} stops, and $U^{\ast}_n= U^{k^{\ast}+1}_n$, $V^{\ast}_n= V^{k^{\ast}+1}_n$, and $\ten{E}^{\ast}= \ten{E}^{k^{\ast}+1}$ respectively. Besides, $\ten{Y}^{\ast}_n$ and $\hat{\ten{Y}}^{\ast}_n$ denote the Lagrange multipliers $\ten{Y}^{k^{\ast}+1}_n$ and $\hat{\ten{Y}}^{k^{\ast}+1}_n$ corresponding to $(\{U^{\ast}_n\}, \{V^{\ast}_n\}, \ten{E}^{\ast})$. Then we have the following lemma whose proof can be found in Appendix ~\ref{appC}.

\begin{lemma}
Given the solution $(\{U^{\ast}_n\}, \{V^{\ast}_n\}, \ten{E}^\ast)$ generated by Algorithm~\ref{alg1}, the following conclusion holds
\begin{align}
\sum_{n=1}^N \lambda_n\|V_n\|_{\ast}+\|\ten{E}\|_1 \geq \sum_{n=1}^N \lambda_n\|V^{\ast}_n\|_{\ast}+\|\ten{E}^{\ast}\|_1 \nonumber\\
\label{eq18} +\sum_{n=1}^N \langle \ten{Y}^{\ast}_n - \hat{\ten{Y}}^{\ast}_n, \ten{E} - \ten{E}^{\ast}\rangle - \sum_{n=1}^N \lambda_n I_n \prod_{m \neq n} I_m \varepsilon,
\end{align}
for any feasible solution $(\{U_n\}, \{V_n\}, \ten{E})$ to problem (\ref{eq3}).
\label{lem2}
\end{lemma}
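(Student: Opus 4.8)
The plan is to obtain \eqref{eq18} from the convexity of the two nonsmooth terms in the objective, reading off the relevant subgradients from the first-order optimality conditions of the two ADMM subproblems that produce $V^*_n$ and $\ten{E}^*$, and then accounting for the inexactness of the stopping point through residual terms that are controlled by $\varepsilon$.

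First I would extract two subgradient inclusions. For the $V_n$-subproblem \eqref{eq12}, the optimality condition is $0 \in \lambda_n \partial\|V^*_n\|_* + \mu^{k^*}\big(V^*_n - (U^*_n)^T G^{k^*}_n\big)$. Using the definition \eqref{eq16} of $\hat{\ten{Y}}_n$ together with the orthonormality $(U^*_n)^T U^*_n = I$, one checks that $(U^*_n)^T \hat{\ten{Y}}^*_{n,(n)} = \mu^{k^*}\big((U^*_n)^T G^{k^*}_n - V^*_n\big)$, so that $(U^*_n)^T \hat{\ten{Y}}^*_{n,(n)} \in \lambda_n \partial\|V^*_n\|_*$. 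Similarly, the optimality condition of the $\ten{E}$-subproblem \eqref{eq14}, combined with the multiplier update in line~6 of Algorithm~\ref{alg1}, gives $\sum_{n=1}^N \ten{Y}^*_n \in \partial\|\ten{E}^*\|_1$. The subgradient inequalities for $\|\cdot\|_*$ at $V^*_n$ and for $\|\cdot\|_1$ at $\ten{E}^*$ then read, for all $V_n$ and $\ten{E}$, $\lambda_n\|V_n\|_* \ge \lambda_n\|V^*_n\|_* + \langle (U^*_n)^T\hat{\ten{Y}}^*_{n,(n)}, V_n - V^*_n\rangle$ and $\|\ten{E}\|_1 \ge \|\ten{E}^*\|_1 + \langle \sum_n \ten{Y}^*_n, \ten{E}-\ten{E}^*\rangle$.

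Summing over $n$ and adding, I would split $\sum_n \langle \ten{Y}^*_n,\ten{E}-\ten{E}^*\rangle = \sum_n\langle \ten{Y}^*_n-\hat{\ten{Y}}^*_n,\ten{E}-\ten{E}^*\rangle + \sum_n\langle \hat{\ten{Y}}^*_n,\ten{E}-\ten{E}^*\rangle$, so that the first group is exactly the cross term appearing in \eqref{eq18}. It then remains to show that the residual quantity $C := \sum_n\big(\langle (U^*_n)^T\hat{\ten{Y}}^*_{n,(n)}, V_n-V^*_n\rangle + \langle \hat{\ten{Y}}^*_n,\ten{E}-\ten{E}^*\rangle\big)$ is bounded below by $-\sum_n \lambda_n I_n\prod_{m\neq n}I_m\,\varepsilon$. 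Writing the first inner product in the mode-$n$ unfolding as $\langle \hat{\ten{Y}}^*_{n,(n)}, U^*_n V_n - U^*_n V^*_n\rangle$ and combining it with $\langle \hat{\ten{Y}}^*_{n,(n)}, \ten{E}_{(n)} - \ten{E}^*_{(n)}\rangle$, I would substitute the feasibility relation $\ten{T}_{(n)} = U_n V_n + \ten{E}_{(n)}$ of the comparison solution and the near-feasibility relation $\ten{T}_{(n)} - U^*_n V^*_n - \ten{E}^*_{(n)} = R_{n,(n)}$ with $\|R_{n,(n)}\|_\infty < \varepsilon$ guaranteed by the stopping criterion. This reduces $C$ to residual inner products of the form $\langle \hat{\ten{Y}}^*_{n,(n)}, R_{n,(n)}\rangle$ plus a cross term stemming from the mismatch between the algorithm's active subspace $U^*_n$ and that of the comparison solution.

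The hard part is precisely this last step: the subgradient naturally lives in the reduced $V$-space, and transporting it back through $U^*_n$ yields only the projection $U^*_n(U^*_n)^T\hat{\ten{Y}}^*_{n,(n)}$, whose spectral norm is controlled by $\lambda_n$ via the dual-norm characterization of the nuclear-norm subdifferential, while the component of $\hat{\ten{Y}}^*_{n,(n)}$ orthogonal to the active subspace is not controlled this way. To close the argument I would exploit the orthonormality of $U^*_n$ and the invariance $\|V_n\|_*=\|U_n V_n\|_*$ to keep the nuclear-norm values aligned, and bound the genuine residual terms by the entrywise inequality $|\langle A,B\rangle| \le \|A\|_\infty\|B\|_1$ together with $\|R_{n,(n)}\|_1 \le I_n\prod_{m\neq n}I_m\,\|R_{n,(n)}\|_\infty$ and the entrywise control $\|\hat{\ten{Y}}^*_{n,(n)}\|_\infty \le \lambda_n$ inherited from the subgradient bound and the boundedness established in Lemma~\ref{lem1}. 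Matching the coefficient $\lambda_n I_n\prod_{m\neq n}I_m$ of $\varepsilon$ in \eqref{eq18} confirms that the $\varepsilon$-slack is exactly what these residuals demand; the delicate point, where the active-subspace structure must be used carefully, is ensuring that the subspace-mismatch contribution either cancels or is itself absorbed into this slack.
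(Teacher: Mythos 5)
Your skeleton matches the paper's (subgradient inclusions from the $V_n$- and $\ten{E}$-subproblems, convexity inequalities, splitting off $\sum_n\langle \ten{Y}^{\ast}_n-\hat{\ten{Y}}^{\ast}_n,\ten{E}-\ten{E}^{\ast}\rangle$, and absorbing the feasibility residual into the $\varepsilon$-term), but there is a genuine gap exactly at the point you flag as ``the delicate point'': you apply the nuclear-norm subgradient inequality in the reduced space at $V^{\ast}_n$ against the comparison point $V_n$, which after unfolding produces $\langle \hat{\ten{Y}}^{\ast}_{n,(n)},\,U^{\ast}_nV_n-U^{\ast}_nV^{\ast}_n\rangle$. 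The feasibility relation of the comparison solution involves $U_nV_n$, not $U^{\ast}_nV_n$, so a subspace-mismatch term $\langle \hat{\ten{Y}}^{\ast}_{n,(n)},\,(U_n-U^{\ast}_n)V_n\rangle$ survives, and it is not small: $U_n$ is an arbitrary feasible Stiefel point, the mismatch is $O(1)$, and the only dual-norm control you have from the optimality condition is on the projection $(U^{\ast}_n)^T\hat{\ten{Y}}^{\ast}_{n,(n)}$, which says nothing about the component of $\hat{\ten{Y}}^{\ast}_{n,(n)}$ orthogonal to the active subspace. Neither cancellation nor absorption into the $\varepsilon$-slack happens by the tools you invoke, so the proof does not close as written. (Relatedly, your claimed entrywise bound $\|\hat{\ten{Y}}^{\ast}_{n,(n)}\|_{\infty}\le\lambda_n$ is not ``inherited from the subgradient bound'' without further input, for the same reason.)

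The paper closes this gap with two ingredients you are missing. First, a structural identity specific to the procrustes/SVT updates: $\hat{\ten{Y}}^{k+1}_{n,(n)}=U^{k+1}_n(U^{k+1}_n)^T\hat{\ten{Y}}^{k+1}_{n,(n)}$, i.e.\ the auxiliary multiplier lies entirely in the column space of the active subspace (equation \eqref{eqS11} in Appendix~\ref{appC}, derived from the SVD structure of $G^k_n$ and of the procrustes solution $U^{k+1}_n$). Second, the transport lemma (Lemma~\ref{lemS2}, from \cite{Liu2012}): left-multiplying a nuclear-norm subgradient by a columnwise orthonormal $Q$ yields a subgradient at the left-multiplied point. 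Combining the two promotes the reduced-space inclusion $(U^{\ast}_n)^T\hat{\ten{Y}}^{\ast}_{n,(n)}\in\lambda_n\partial\|V^{\ast}_n\|_{\ast}$ to the full-space inclusion $\hat{\ten{Y}}^{\ast}_{n,(n)}\in\lambda_n\partial\|U^{\ast}_nV^{\ast}_n\|_{\ast}$, and it also yields the spectral-norm bound $\|\hat{\ten{Y}}^{\ast}_{n,(n)}\|\le\lambda_n$ on the \emph{whole} matrix. One then writes the convexity inequality directly at the full products, $\lambda_n\|U_nV_n\|_{\ast}\ge\lambda_n\|U^{\ast}_nV^{\ast}_n\|_{\ast}+\langle\hat{\ten{Y}}^{\ast}_{n,(n)},U_nV_n-U^{\ast}_nV^{\ast}_n\rangle$, which costs nothing since $\|V_n\|_{\ast}=\|U_nV_n\|_{\ast}$; the mismatch term never arises, and the leftover $\langle\hat{\ten{Y}}^{\ast}_n,\fold_n(U_nV_n)+\ten{E}-\fold_n(U^{\ast}_nV^{\ast}_n)-\ten{E}^{\ast}\rangle$ is a pure feasibility residual bounded by $\lambda_nI_n\prod_{m\neq n}I_m\,\varepsilon$ via Theorem~\ref{the1}. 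Without the column-space identity and the transport lemma, your argument cannot reach \eqref{eq18}.
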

To reach the global optimality of problem (\ref{eq3}), we are required to show that the term $\sum_{n=1}^N \langle \ten{Y}^{\ast}_n - \hat{\ten{Y}}^{\ast}_n, \ten{E} - \ten{E}^{\ast} \rangle$ almost surely vanishes. According to the proofs of Theorem~\ref{the1} and Lemma~\ref{lem2} (see the Supplementary Materials), we can conclude that
\begin{align}
\Big\|\sum_{n=1}^N \left(\ten{Y}^{\ast}_n - \hat{\ten{Y}}^{\ast}_n\right)\Big\|_{\infty}  \leq  & \,\,\Big\|\sum_{n=1}^N \ten{Y}^{\ast}_n\Big\|_{\infty} + \sum_{n=1}^N \|\hat{\ten{Y}}^{\ast}_n\|_{\infty} \nonumber\\
\leq & \,\, 1 + \sum_{n=1}^N \lambda_n
\end{align}
which means that $\sum_{n=1}^N (\ten{Y}^{\ast}_n - \hat{\ten{Y}}^{\ast}_n)$ is bounded. By
setting the parameter $\rho$ to be relatively small (e.g., $\rho = 1.1$ as suggested in~\cite{Liu2012}), $\sum_{n=1}^N (\ten{Y}^{\ast}_n - \hat{\ten{Y}}^{\ast}_n)$ can be sufficiently small. Let $\epsilon = \|\sum_{n=1}^N (\ten{Y}^{\ast}_n - \hat{\ten{Y}}^{\ast}_n)\|_{\infty}$, then we have the following theorems.

\begin{theorem}
Let $f^g$ be the globally optimal objective function value of (\ref{eq3}), and $f^\ast$ be the objective function value of (\ref{eq3}) generated by Algorithm~\ref{alg1}. We have that
\begin{equation}
f^\ast \leq f^g + c \epsilon + \sum_{n=1}^N \lambda_n I_n \prod_{m \neq n} I_m \varepsilon
\label{eq20}
\end{equation}
where $c$ is a constant defined by
\begin{displaymath}
c = \frac{1}{\mu^0 N^2} \sum_{n=1}^N I_n \prod_{m \neq n} I_{m} \left(\frac{\rho(1+\rho)}{\rho-1} + \frac{1}{2\rho^{k^\ast}}\right) + \|\mathcal{T}\|_1
\end{displaymath}
\label{the2}
\end{theorem}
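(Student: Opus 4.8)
The plan is to combine Lemma~\ref{lem2} with the global optimizer of problem~(\ref{eq3}) and then control the inner-product cross term using the bound $\epsilon = \|\sum_{n=1}^N (\ten{Y}^{\ast}_n - \hat{\ten{Y}}^{\ast}_n)\|_{\infty}$. Let $(\{U^g_n\}, \{V^g_n\}, \ten{E}^g)$ be a globally optimal solution attaining $f^g$, so that $\ten{T} = \fold_n(U^g_n V^g_n) + \ten{E}^g$ for all $n$. Since this is a feasible point, I would instantiate the inequality~(\ref{eq18}) of Lemma~\ref{lem2} at this global optimizer. Writing $f^g = \sum_{n=1}^N \lambda_n\|V^g_n\|_{\ast} + \|\ten{E}^g\|_1$ and $f^\ast = \sum_{n=1}^N \lambda_n\|V^\ast_n\|_{\ast} + \|\ten{E}^\ast\|_1$, Lemma~\ref{lem2} immediately rearranges to
\begin{equation}
f^\ast \leq f^g - \sum_{n=1}^N \langle \ten{Y}^{\ast}_n - \hat{\ten{Y}}^{\ast}_n, \ten{E}^g - \ten{E}^{\ast}\rangle + \sum_{n=1}^N \lambda_n I_n \prod_{m \neq n} I_m \varepsilon. \nonumber
\end{equation}
Comparing this with the target~(\ref{eq20}), the entire burden reduces to showing that the cross term is at most $c\epsilon$, i.e. bounding $-\sum_{n=1}^N \langle \ten{Y}^{\ast}_n - \hat{\ten{Y}}^{\ast}_n, \ten{E}^g - \ten{E}^{\ast}\rangle \leq c\epsilon$.

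To handle the cross term, I would first apply the duality between the $\ell_1$ and $\ell_\infty$ norms on the vectorized tensors, giving
\begin{equation}
\Big| \sum_{n=1}^N \langle \ten{Y}^{\ast}_n - \hat{\ten{Y}}^{\ast}_n, \ten{E}^g - \ten{E}^{\ast}\rangle \Big| = \Big| \Big\langle \sum_{n=1}^N (\ten{Y}^{\ast}_n - \hat{\ten{Y}}^{\ast}_n),\, \ten{E}^g - \ten{E}^{\ast}\Big\rangle \Big| \leq \Big\| \sum_{n=1}^N (\ten{Y}^{\ast}_n - \hat{\ten{Y}}^{\ast}_n)\Big\|_{\infty} \|\ten{E}^g - \ten{E}^{\ast}\|_1 = \epsilon \, \|\ten{E}^g - \ten{E}^{\ast}\|_1. \nonumber
\end{equation}
It then remains to bound $\|\ten{E}^g - \ten{E}^{\ast}\|_1$ by the constant $c$. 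For this I would use the triangle inequality $\|\ten{E}^g - \ten{E}^{\ast}\|_1 \leq \|\ten{E}^g\|_1 + \|\ten{E}^{\ast}\|_1$ and argue that $\|\ten{E}^g\|_1 \leq \|\ten{T}\|_1$ (the all-low-rank, zero-sparse feasible point shows the optimal sparse part cannot exceed $\ten{T}$ in $\ell_1$), which supplies the trailing $\|\mathcal{T}\|_1$ term appearing in $c$. The first summand of $c$, namely $\frac{1}{\mu^0 N^2}\sum_n I_n \prod_{m\neq n} I_m\big(\frac{\rho(1+\rho)}{\rho-1} + \frac{1}{2\rho^{k^\ast}}\big)$, I would obtain as an explicit bound on $\|\ten{E}^{\ast}\|_1$ coming from the update rule~(\ref{eq15}) together with the multiplier recursion and the geometric growth $\mu^{k+1} = \rho\mu^k$; converting the telescoped multiplier differences into an $\ell_1$ bound on $\ten{E}^\ast$ produces exactly the geometric series $\sum_k \rho^{-k}$ that sums to $\frac{\rho}{\rho-1}$-type factors, with the dimensional prefactor $I_n\prod_{m\neq n}I_m$ arising from converting an $\ell_\infty$ (or Frobenius) control on the tensors into an $\ell_1$ bound over all $\prod_n I_n$ entries.

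The main obstacle I anticipate is the bookkeeping in the last step: producing precisely the stated geometric-series constant $\frac{\rho(1+\rho)}{\rho-1} + \frac{1}{2\rho^{k^\ast}}$ requires carefully tracking how the bounded multiplier sequences $\{\ten{Y}^k_n\}$ and $\{\hat{\ten{Y}}^k_n\}$ from Lemma~\ref{lem1} accumulate across iterations and relating their Frobenius-norm bounds to an $\ell_1$ bound on $\ten{E}^\ast$ via Cauchy--Schwarz over the ambient dimension. The norm-conversion factors ($\ell_\infty \to \ell_1$ costs a factor equal to the number of entries, Frobenius $\to \ell_1$ costs the square root of the number of entries) must be chosen consistently to land on the exact dimensional weights $I_n\prod_{m\neq n}I_m$, and the derivation of these bounds is precisely what is deferred to the Supplementary Materials referenced in the text. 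Everything upstream of this—the instantiation of Lemma~\ref{lem2} and the Hölder bound on the cross term—is routine.
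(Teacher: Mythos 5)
Your skeleton matches the paper's proof exactly: instantiate Lemma~\ref{lem2} at a global optimizer, bound the cross term by H\"older duality as $\epsilon\,\|\ten{E}^g-\ten{E}^\ast\|_1$, split by the triangle inequality, and account for the two summands of $c$ as bounds on $\|\ten{E}^g\|_1$ and $\|\ten{E}^\ast\|_1$ respectively. Two issues remain. First, a minor slip: the feasible point that yields $\|\ten{E}^g\|_1\le\|\ten{T}\|_1$ is the all-sparse one, $(\{U_n\}=\bm{0},\{V_n\}=\bm{0},\ten{E}=\ten{T})$, whose objective value is $\|\ten{T}\|_1$; you describe it as the ``all-low-rank, zero-sparse'' point, which would have objective $\sum_n\lambda_n\|\ten{T}_{(n)}\|_\ast$ and does not give the needed bound.

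Second, and more substantively, the derivation of the first summand of $c$ is left unexecuted, and the mechanism you point to is not the one that produces it. You attribute the bound on $\|\ten{E}^\ast\|_1$ to the update rule~(\ref{eq15}) for $\ten{E}$ together with the multiplier recursion; but the proximal step for $\ten{E}$ alone gives no control on $\|\ten{E}^\ast\|_1$. The paper instead reuses the monotone-descent inequality from the proof of Lemma~\ref{lem1}, namely
\begin{equation}
\mathcal{L}_{\mu^k}(\ser{U}^{k+1},\ser{V}^{k+1},\ten{E}^{k+1},\ser{Y}^k)\;\le\;\mathcal{L}_{\mu^{k-1}}(\ser{U}^k,\ser{V}^k,\ten{E}^k,\ser{Y}^{k-1})+\sum_{n=1}^N\frac{\mu^{k-1}+\mu^k}{2(\mu^{k-1})^2}\,\|\ten{Y}^k_n-\ten{Y}^{k-1}_n\|^2_F,
\end{equation}
telescoped via $\sum_k\frac{\mu^{k-1}+\mu^k}{2(\mu^{k-1})^2}=\frac{\rho(\rho+1)}{2\mu^0(\rho-1)}$, combined with the identity expressing $\sum_n\lambda_n\|V^\ast_n\|_\ast+\|\ten{E}^\ast\|_1$ as the Lagrangian minus the multiplier correction $\sum_n\|\ten{Y}^{k^\ast}_n\|_F^2/(2\mu^{k^\ast})$, which supplies the $\frac{1}{2\rho^{k^\ast}}$ term. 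A further ingredient you omit entirely is where the $1/N^2$ in $c$ comes from: the paper argues that, because all multipliers are initialized at zero and follow the same recursion at a feasible limit, $\ten{Y}^\ast_n=\ten{Y}^\ast_m$ for all $n,m$, so the constraint $\|\sum_n\ten{Y}^\ast_n\|_\infty\le 1$ from~(\ref{eqS1}) gives $\|\ten{Y}^\ast_n\|_\infty\le 1/N$ for each individual $n$, and hence $\|\ten{Y}^\ast_n\|_F\le\sqrt{I_n\prod_{m\ne n}I_m}/N$. Without this step the dimensional prefactor and the $N^{-2}$ in the stated constant cannot be recovered, so the proposal as written does not yet establish the theorem with the claimed $c$.
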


\begin{theorem}
Suppose $(\ten{X}^o, \ten{E}^o)$ is an optimal solution to problem (\ref{eq1}), the Tucker rank of $\ten{X}^o$ is $(r_1, \cdots, r_N)$, and $f^o= \sum_{n=1}^N \alpha_n \|\ten{X}^o_{(n)}\|_{\ast} + \lambda\|\ten{E}^o\|_1$. Let $f^\ast$ be the objective function value of (\ref{eq3}) returned by Algorithm~\ref{alg1}, then we have
\begin{align}
& f^o \leq f^\ast \leq f^o + c \epsilon + \sum_{n=1}^N \lambda_n I_n \prod_{m \neq n} I_m \varepsilon \nonumber\\
\label{eq21} & + \sum_{n=1}^N \lambda_n \left(\sqrt{I_n \prod_{m \neq n} I_m} - 1 \right)\sigma_{(n)}^{R_n+1} \max(r_n - R_n, 0),
\end{align}
where $\sigma_{(n)}^{i}$ is the $i$th largest singular value of $\ten{X}^o_{(n)}$.
\label{the3}
\end{theorem}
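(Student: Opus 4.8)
The plan is to bracket the value $f^\ast$ returned by Algorithm~\ref{alg1} between $f^o$ and $f^o$ plus a truncation error, and then to reuse Theorem~\ref{the2} to absorb the optimization tolerances $\epsilon$ and $\varepsilon$. Write $d_n = I_n\prod_{m\neq n}I_m$ for the number of entries, and recall that problem~(\ref{eq3}) is precisely problem~(\ref{eq1}) restricted to low-rank components $\ten{X}=\ten{T}-\ten{E}$ whose mode-$n$ rank is at most $R_n$ for every $n$: the shared residual $\ten{E}$ forces $\fold_n(U_nV_n)=\ten{T}-\ten{E}$ to be one common tensor, and $U_n\in\St(I_n,R_n)$ caps its mode-$n$ rank at $R_n$.

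\emph{Lower bound.} For $f^o\le f^\ast$ I would use this restriction viewpoint directly. At the point returned by the algorithm the orthonormality of $U_n^\ast$ gives $\|V_n^\ast\|_\ast=\|\ten{X}^\ast_{(n)}\|_\ast$ with $\ten{X}^\ast=\ten{T}-\ten{E}^\ast$, so the reformulated objective evaluated there equals the original objective of~(\ref{eq1}) at the feasible pair $(\ten{X}^\ast,\ten{E}^\ast)$ (the $\varepsilon$-slack in feasibility being absorbed exactly as in the proof of Theorem~\ref{the2}). Since $f^o$ is the optimum of~(\ref{eq1}) over the larger, unrestricted feasible set, no feasible point can undercut it, which yields $f^o\le f^\ast$.

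\emph{Upper bound.} By Theorem~\ref{the2} it suffices to establish the purely structural estimate
\begin{equation}
f^g \;\le\; f^o + \sum_{n=1}^N \lambda_n\left(\sqrt{d_n}-1\right)\sigma_{(n)}^{R_n+1}\max(r_n-R_n,0),
\label{eq:trunc}
\end{equation}
since substituting (\ref{eq:trunc}) into the conclusion of Theorem~\ref{the2} reproduces exactly the right-hand side of~(\ref{eq21}). To prove (\ref{eq:trunc}) I would exhibit one admissible point of~(\ref{eq3}) built from the optimizer $(\ten{X}^o,\ten{E}^o)$ of~(\ref{eq1}) by a sequentially truncated higher-order SVD: truncate $\ten{X}^o$ mode by mode, keeping in mode $n$ only the leading $R_n$ left singular vectors of the current (partially truncated) unfolding, which produces a tensor $\ten{X}$ of Tucker rank at most $(R_1,\dots,R_N)$ together with induced factors $U_n,V_n$ and residual $\ten{E}=\ten{T}-\ten{X}$. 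Two one-sided estimates then feed into $f^g\le\sum_n\lambda_n\|\ten{X}_{(n)}\|_\ast+\|\ten{E}\|_1$: each mode-$n$ truncation only discards singular values, so the nuclear-norm part $\sum_n\lambda_n\|\ten{X}_{(n)}\|_\ast$ can only \emph{decrease} relative to $\sum_n\lambda_n\|\ten{X}^o_{(n)}\|_\ast$; meanwhile, writing $\ten{X}^o-\ten{X}$ as a telescoping sum of the per-mode increments and using $\|\ten{E}\|_1\le\|\ten{E}^o\|_1+\|\ten{X}^o-\ten{X}\|_1$ controls the growth of the $\ell_1$ term.

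The hard part will be pinning down the exact coefficient $\lambda_n(\sqrt{d_n}-1)$ while respecting the single-tensor constraint. The factor $\sqrt{d_n}$ is the price of converting the mode-$n$ truncation error from the nuclear/Frobenius scale to the entrywise $\ell_1$ scale via $\|\ten{A}\|_1\le\sqrt{d_n}\,\|\ten{A}_{(n)}\|_F\le\sqrt{d_n}\,\|\ten{A}_{(n)}\|_\ast$, applied to each rank-$(r_n-R_n)$ increment; bounding that increment's nuclear norm by $\sigma_{(n)}^{R_n+1}(r_n-R_n)$ requires $\sigma^i\le\sigma_{(n)}^{R_n+1}$ for the \emph{intermediate} unfoldings produced along the sequential truncation, which I would secure through singular-value interlacing (projection onto a subspace cannot increase singular values), so that all intermediate singular values are dominated by those of $\ten{X}^o$. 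The compensating $-1$, together with the $\lambda_n$ weight, I expect to come from pairing this $\ell_1$ growth against the simultaneous decrease of $\lambda_n\|\ten{X}_{(n)}\|_\ast$ under truncation --- equivalently from the subgradient/multiplier estimate $\|\ten{Y}^\ast_n\|_\infty\le\lambda_n$ already exploited in Lemma~\ref{lem2} --- so that the two per-mode effects partially cancel rather than merely being bounded separately; carrying out this accounting cleanly, across all modes at once, is the delicate point the proof must handle. As a sanity check, when every $R_n\ge r_n$ the increments are empty, (\ref{eq:trunc}) collapses to $f^g\le f^o$, and combined with $f^g\ge f^o$ from the restriction argument it gives $f^g=f^o$, after which the bound~(\ref{eq21}) reduces to that of Theorem~\ref{the2}.
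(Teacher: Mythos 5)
Your overall strategy---construct a feasible point of (\ref{eq3}) by truncating the SVD of $\ten{X}^o$, bound the resulting increase of the objective, and invoke Theorem~\ref{the2}---is exactly the paper's strategy in Appendix~\ref{appE}, and your lower bound and your reduction to the estimate $f^g \le f^o + \sum_{n}\lambda_n(\sqrt{d_n}-1)\sigma^{R_n+1}_{(n)}\max(r_n-R_n,0)$ (with $d_n=I_n\prod_{m\neq n}I_m$ as you write) are the right targets. The problem is that the one step you defer as ``the delicate point the proof must handle'' is the entire content of the theorem, and the mechanism you guess for it is not the right one. The $-1$ in the coefficient has nothing to do with the multiplier bound $\|\ten{Y}^{\ast}_n\|_{\infty}\le\lambda_n$ from Lemma~\ref{lem2}. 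In the paper, for each mode with $R_n<r_n$ one splits the skinny SVD as $\ten{X}^o_{(n)}=U^1_n\Sigma^1_n(V^1_n)^T+U^2_n\Sigma^2_n(V^2_n)^T$, keeps $U''_n=U^1_n$, $V''_n=\Sigma^1_n(V^1_n)^T$, dumps the tail into the residual, and then uses the \emph{exact identity} $\|\Sigma^1_n\|_{\ast}=\|\ten{X}^o_{(n)}\|_{\ast}-\|\Sigma^2_n\|_{\ast}$ for the nuclear-norm decrease, paired with $\|U^2_n\Sigma^2_n(V^2_n)^T\|_1\le\sqrt{d_n}\,\|U^2_n\Sigma^2_n(V^2_n)^T\|_F\le\sqrt{d_n}\,\|\Sigma^2_n\|_{\ast}\le\sqrt{d_n}\,\sigma^{R_n+1}_{(n)}(r_n-R_n)$ for the $\ell_1$ increase; subtracting the credit from the cost yields $(\sqrt{d_n}-1)\|\Sigma^2_n\|_{\ast}$. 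Your plan replaces this exact cancellation with the one-sided statement that ``truncation can only decrease the nuclear norm,'' which discards precisely the $-\|\Sigma^2_n\|_{\ast}$ credit the $-1$ comes from.

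Your sequentially truncated HOSVD is motivated by a genuine issue---problem (\ref{eq3}) shares one $\ten{E}$ across all modes, so the paper's mode-dependent residual $\ten{E}''=\ten{E}^o+\fold_n(U^2_n\Sigma^2_n(V^2_n)^T)$ is not literally a single feasible point---but it makes the quantitative accounting strictly harder, not easier. After you have truncated the other modes, the mode-$n$ unfolding is no longer $\ten{X}^o_{(n)}$, so the drop $\|\ten{X}^o_{(n)}\|_{\ast}-\|\ten{X}_{(n)}\|_{\ast}$ is no longer exactly $\|\Sigma^2_n\|_{\ast}$ and need not dominate the nuclear norm of the mode-$n$ increment you feed into $\ten{E}$; singular-value interlacing gives you upper bounds on the intermediate spectra (enough for the $\sqrt{d_n}\,\sigma^{R_n+1}_{(n)}(r_n-R_n)$ cost) but not the matching lower bound on the decrease that the $-1$ requires. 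There is also a weighting mismatch you cannot fix with the subgradient estimate: the $\ell_1$ term enters the objective of (\ref{eq3}) with coefficient $1$, not $\lambda_n$, so landing on $\lambda_n(\sqrt{d_n}-1)$ requires either the paper's per-mode bookkeeping or an extra assumption such as $\lambda_n\ge 1$. To prove the theorem as stated, follow the paper's two-case, per-mode construction and its exact split of $\|\ten{X}^o_{(n)}\|_{\ast}$ into kept and discarded singular values.
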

The proofs of Theorem~\ref{the2} and~\ref{the3} can be found in Appendix ~\ref{appD} and ~\ref{appE}. These two theorems reduce to their counterparts, Theorem 3.1 and 3.2 in~\cite{Liu2012}, provided that the tensors are two-dimensional matrices.

\section{Experiments and Discussions}

In this section, we systematically evaluate the effectiveness and efficiency of our PASD method on synthetic and real-world data. All the experiments are performed with \textsc{Matlab} 8.1 on an Intel Xeon E5-2620 workstation with 2.0-GHz CPU and 24-GB memory.

\begin{table*}[!t]
\caption{RSE and running time (seconds) comparison on synthetic tensor data.}
\label{tab1}
\vspace{0.5em}
\centering
\begin{tabular}{ccccccccc}
\multicolumn{9}{c}{(a) Tensor size: 100 $\times$ 100 $\times$ 100}\\
\hline
 & \multicolumn{2}{c}{RPCA} & \multicolumn{2}{c}{MALM} & \multicolumn{2}{c}{SNN} & \multicolumn{2}{c}{PASD}\\
$\rho_n$ & RSE & Time & RSE & Time & RSE & Time & RSE & Time \\
\hline
5\% & 1.87e-7	& 35.50 & 1.80e-7 & 164.26 & 1.28e-7 & 171.21 & 1.12e-7	& 54.89\\
10\% & 3.92e-7	& 54.66 & 4.69e-5 & 169.97 & 9.91e-7 & 185.48 & 1.28e-7	& 55.95\\
20\% & 8.08e-4	& 65.78 & 1.98e-3 & 189.67 & 1.22e-6 & 207.74 & 1.44e-7	& 57.65\\
\hline
\multicolumn{9}{c}{(a) Tensor size: 50 $\times$ 50 $\times$ 50 $\times$ 50}\\
\hline
 & \multicolumn{2}{c}{RPCA} & \multicolumn{2}{c}{MALM} & \multicolumn{2}{c}{SNN} & \multicolumn{2}{c}{PASD}\\
$\rho_n$ & RSE & Time & RSE & Time & RSE & Time & RSE & Time \\
\hline
5\% & 4.57e-3 & 300.34 & 3.50e-3  &	1191.80 & 3.30e-3 &1750.07 & 2.99e-3 & 469.46\\
10\% & 1.20e-2 & 453.26 & 7.74e-3 & 1777.02 & 9.02e-3 & 1782.93 & 8.33e-3 & 483.37 \\
20\% & 9.63e-1 & 611.13 & 9.39e-1 & 2344.38 & 4.72e-2 & 2286.70 & 4.27e-2 & 544.28 \\
\hline
\end{tabular}
\end{table*}

\subsection{Synthetic Tensor Recovery}

We generate a low-rank tensor $\ten{T}_0 \in \Re^{I_1 \times \cdots \times I_N}$, which is
used as ground truth, by the Tucker decomposition model. As described in~\cite{}, we draw the entries of the core tensor $\ten{C} \in \Re^{r_1 \times \cdots \times r_N}$ from the standard normal distribution $\mathcal{N}(0, 1)$ and multiply each mode of the core tensor by an columnwise orthonormal factor matrix $U_n \in \Re^{I_n \times r_n}$ drawn from the Haar measure. All generated tensors were verified to have the desired Tucker rank. A random fraction $\rho_n$ of the tensor elements were corrupted by additive i.i.d. noise from the uniform distribution $\mathcal{U}[-1, 1]$.

We recover the low-rank tensor by our PASD algorithm and compare it with two state-of-the-art approaches, MALM~\cite{Tan2013} and SNN~\cite{Huang2015}. We also conduct RPCA~\cite{Candes2011} on the unfoldings along all the modes and report the best result.  Without loss of generality, we set the size of tensor to be $100 \times 100 \times 100$ and $50 \times 50 \times 50 \times 50$ respectively and fix the Tucker rank to be $r_n = 10, \forall n \in \mathds{N}$.  We set $\varepsilon = 10^{-5}$ and maxiter = 1000 for all the algorithms. The parameter $\lambda$ for RPCA and MALM are set to be the default values. For SNN and PASD, the parameters $\lambda_n$ are set as $\frac{\sqrt{\max(I_n, \prod_{j \neq i}I_j)}}{N}$. The upper bound of Tucker ranks are chosen as $R_n = R = \lfloor1.2r\rfloor, \forall n \in \mathds{N}$ for PASD. The relative square error (RSE) of the recovered tensor $\ten{X}$ is measured by $\textup{RSE} = \|\ten{X} - \ten{T}_0\|_F/\|\ten{T}_0\|_F$.

The average results (RSE and computational time) of ten independent runs are summarized in Table~\ref{tab1}, where $\rho_n$ is set to 5\%, 10\% or 20\%. We can see clearly that the PASD algorithm always outperforms the other approaches in terms of RSE and efficiency
in all the cases. In particular, it can yield much more accurate solutions using less time for synthetic tensors of size $50 \times 50 \times 50 \times 50$ which is much more difficult to be recovered due to the relatively large ratio of the Tucker rank and tensor size. The empirical performance of all these methods can be measured using phase transition plots, which use grayscale colors to depict how likely a certain kind of low-rank tensors can be recovered by those algorithms for a range of different ranks from errors of varying sparsity. If the relative error $\textup{RSE} \leq 10^{-3}$, we declare the trial to be successful. Fig.~\ref{fig1} shows the phase transition plots of all algorithms on the third-order tensors of size $100 \times 100 \times 100$, where the $x$-axis corresponds to the Tucker rank $r_n, \forall n \in \mathds{N}$ changing from 2 to 50 with increment 2, and the $y$-axis denotes $\rho_n$ varying from $2\%$ to $50\%$ with increment $2\%$. For each setting, ten independent trials were run.

\begin{figure}[!t]
\centering
\subfigure[\large RPCA]{\includegraphics[width = 0.45\textwidth]{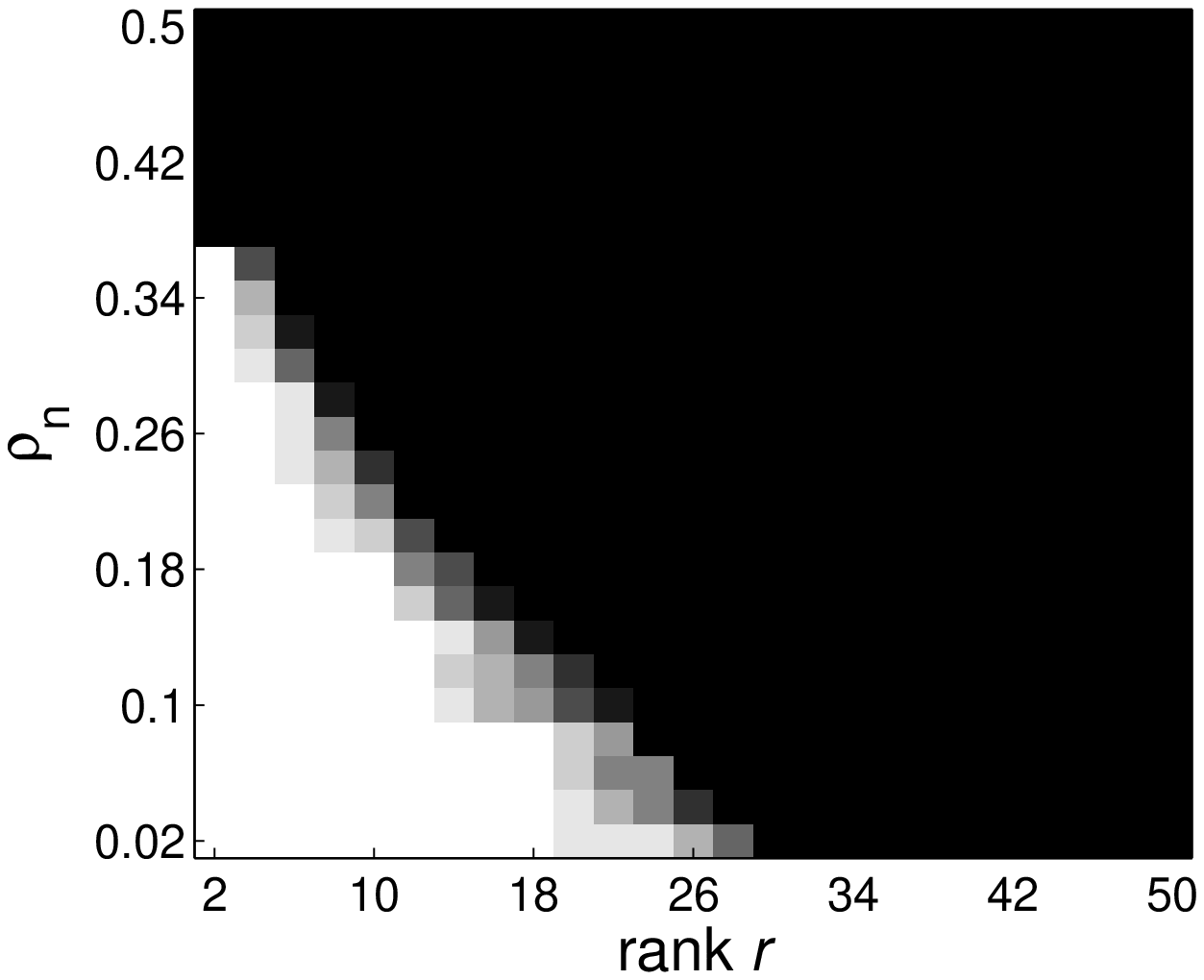}}
\subfigure[\large MALM]{\includegraphics[width = 0.45\textwidth]{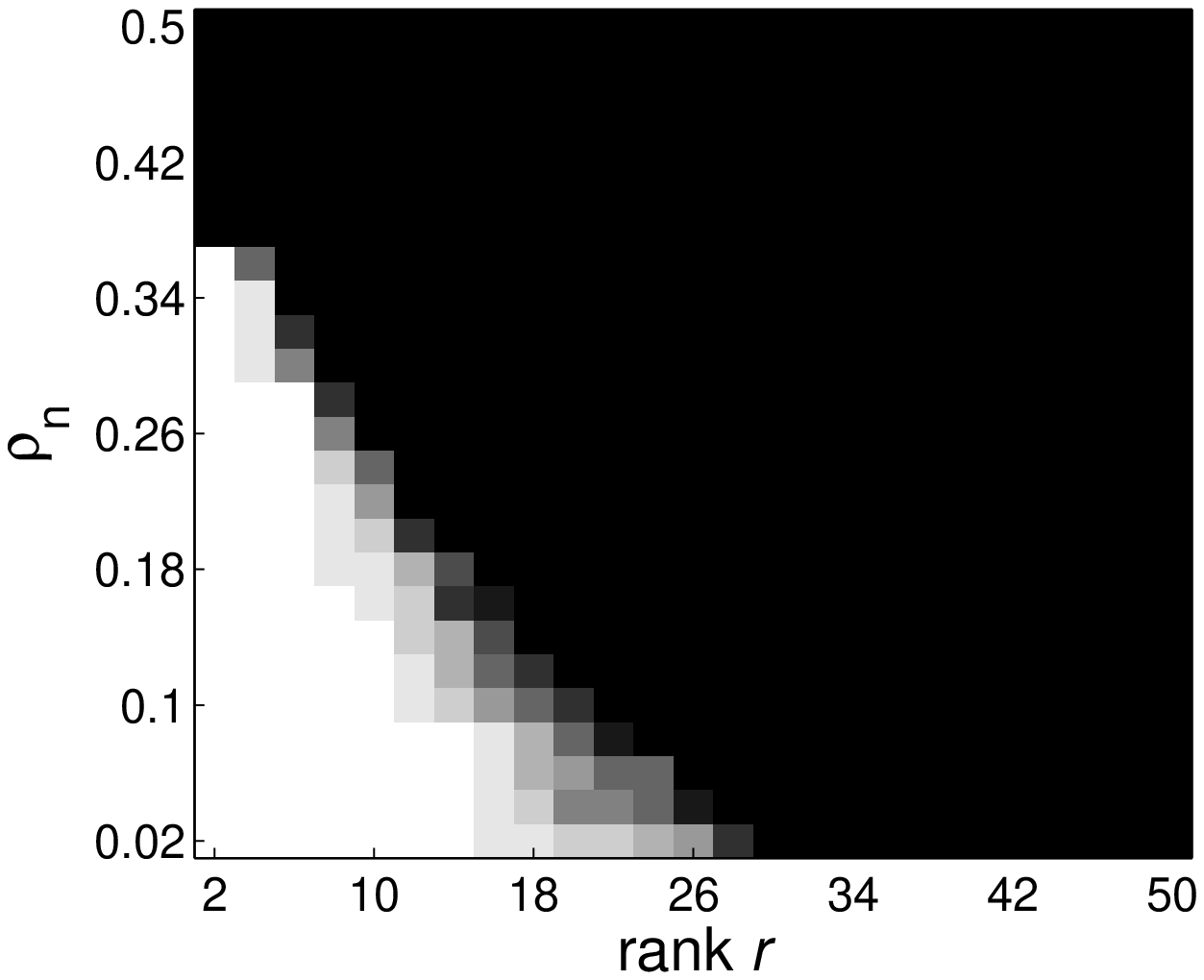}}\\
\subfigure[\large SNN]{\includegraphics[width = 0.45\textwidth]{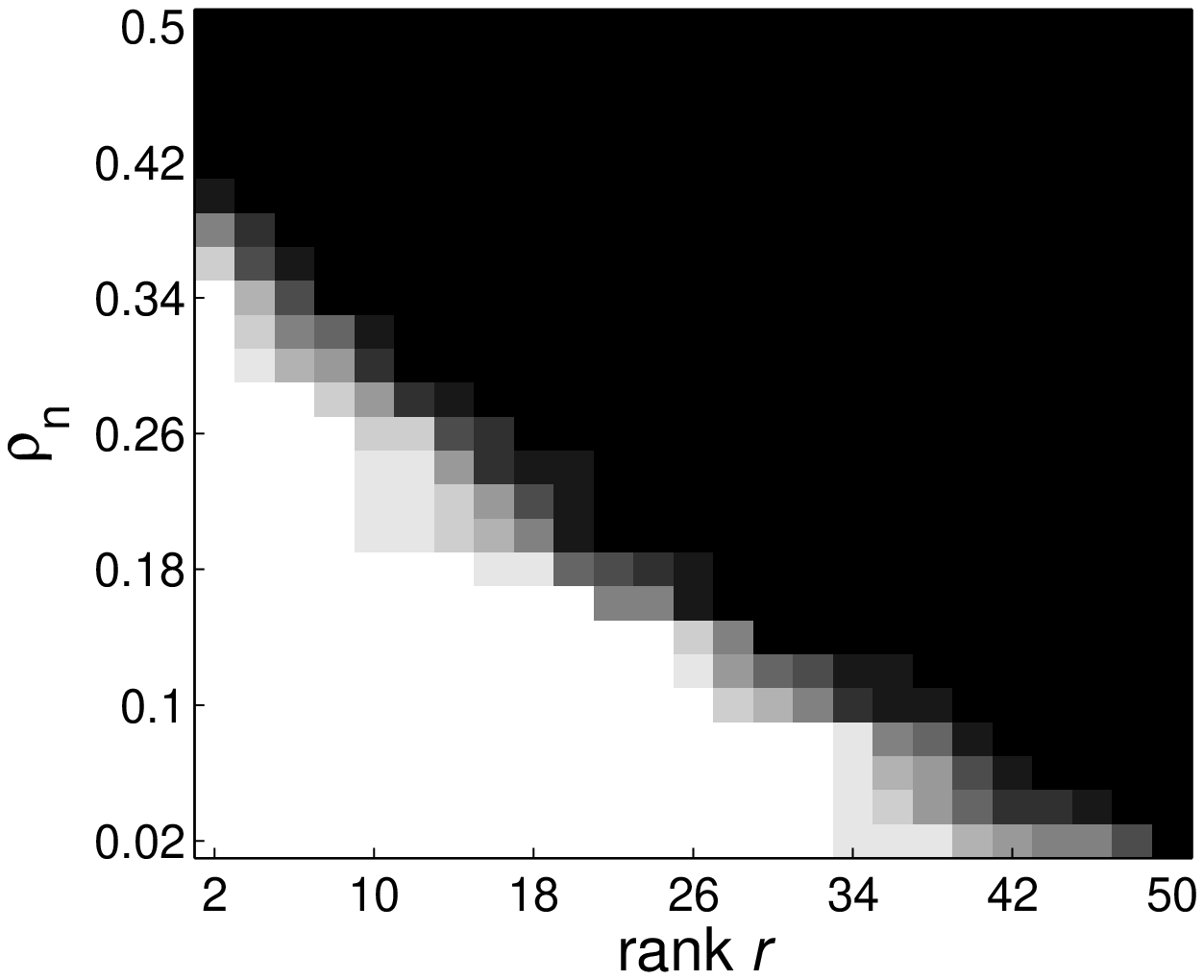}}
\subfigure[\large PASD]{\includegraphics[width = 0.45\textwidth]{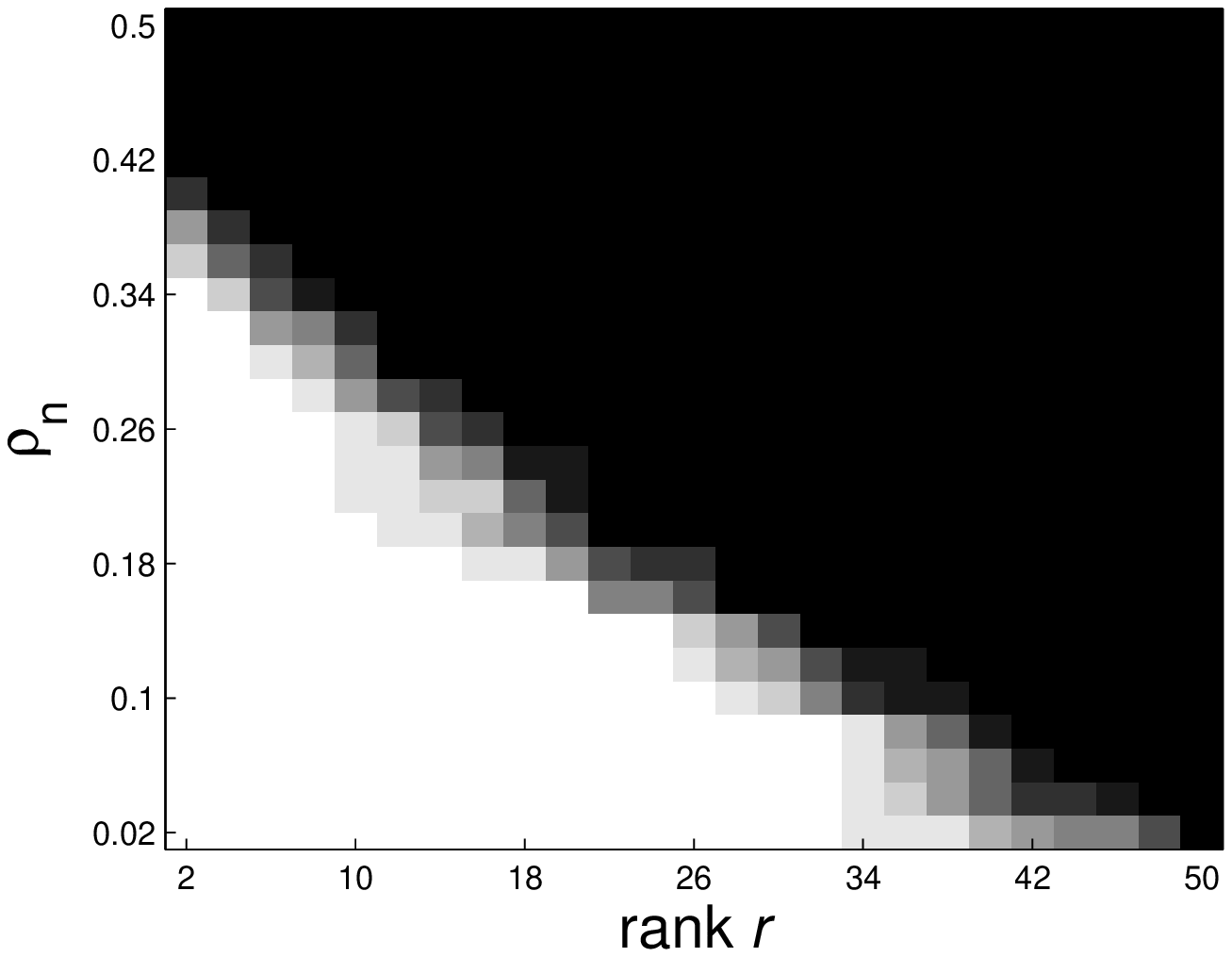}}
\caption{Phase transition plots on the third-order tensors. White region: 100\% success and black region: 0\% success in all experiments.}
\label{fig1}
\end{figure}

Next, we check the running time of all the methods on the 3-order tensors with varying sizes. As shown in Figure~\ref{fig2}, the running time of PASD increases much more slowly than those of the other approaches, which indicates that our PASD method is quite suitable for large-scale applications.

\begin{figure}[!t]
\centering
\subfigure[]{\includegraphics[width = 0.45\textwidth]{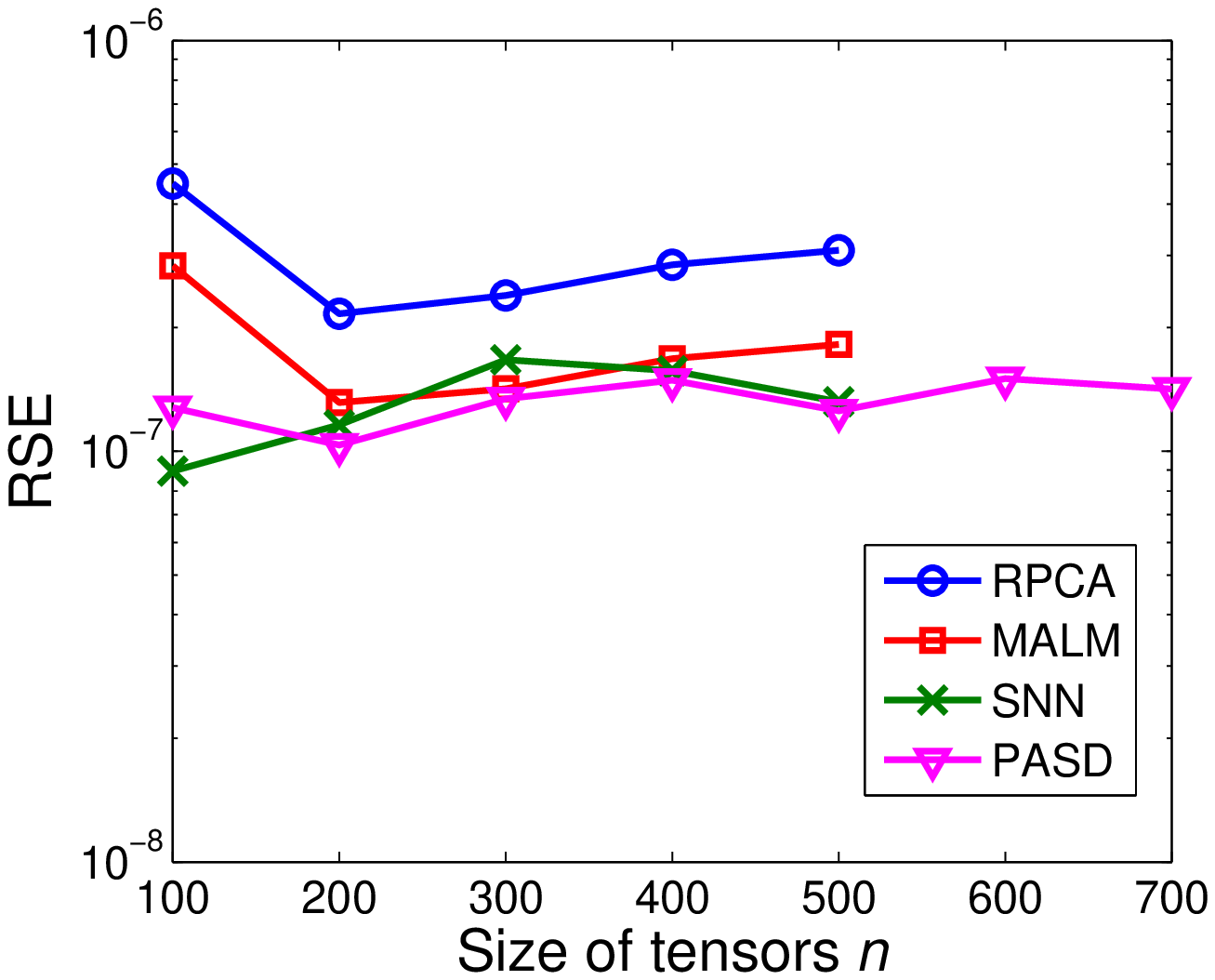}}
\subfigure[]{\includegraphics[width = 0.45\textwidth]{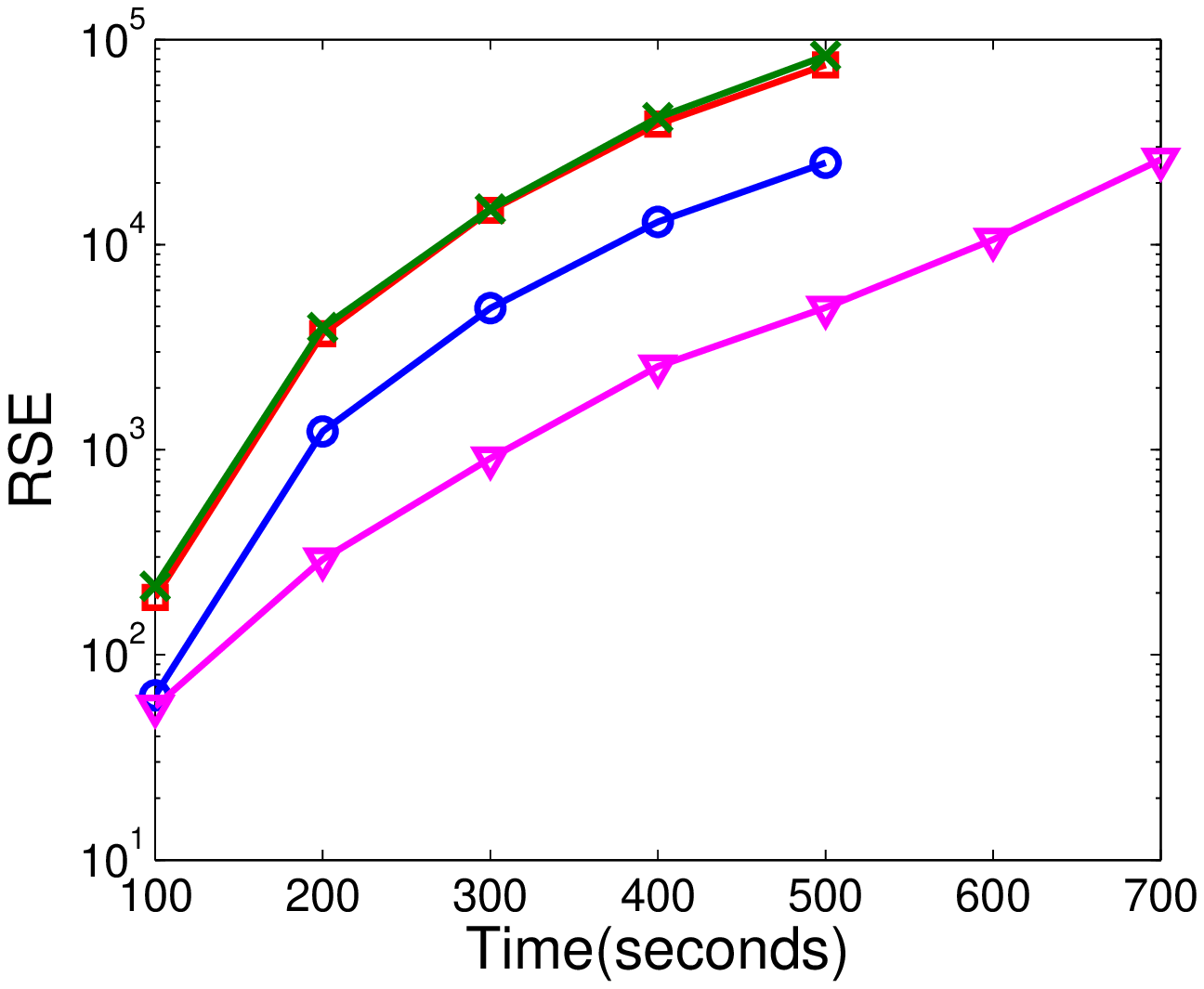}}
\caption{Comparison of all these methods in terms of RSE and computational
time (in logarithmic scale) on the third-order tensors by varying given tensor sizes.}
\label{fig2}
\end{figure}

\subsection{MRI Image Restoration}

In this experiment, we compare our PASD method with other approaches on the brain MRI image data, which is of size $181 \times 217 \times 181$ and is approximately low-rank~\cite{Liu2013}. We randomly choose $\rho_n$ percentage of pixels for each image to be corrupted by random values in [0, 255], where $\rho_n$ varies from 5\% to 30\%. We employ the Peak Signal to Noise Ratio (PSNR) to measure the difference between original image and the images recovered by various methods. For a specific $\rho_n$, the experiment is repeated 10 times and the average results are reported in Table~\ref{tab2}, where the parameters for all the methods are set as in the synthetic experiments and the upper bound of Tucker ranks are chosen as $R_n = 40, \forall n \in \mathds{N}$ for PASD.

\begin{table}
\caption{Average PSNR and running time (seconds) comparison on brain MRI data.}
\label{tab2}
\vspace{0.5em}
\centering
\begin{tabular}{c|cccccccc}
\hline
$\rho_n$ & \multicolumn{2}{c}{RPCA} & \multicolumn{2}{c}{MALM} & \multicolumn{2}{c}{SNN}  & \multicolumn{2}{c}{PASD}\\
& PSNR & Time &  PSNR & Time &  PSNR & Time &  PSNR & Time\\
\hline
0.05 & 47.96 & 529.91 & 48.07 & 1282.10 & 56.23 & 1482.00 & 56.22 & 324.97\\
0.10 & 46.89 & 517.95 & 47.02 & 1386.86 & 56.05 & 1281.08 & 56.02 & 311.23\\
0.15 & 45.07 & 392.70 & 45.27 & 1077.11 & 55.64 & 1069.74 & 55.58 & 270.69\\
0.20 & 42.62 & 399.76 & 42.97 & 977.63 & 54.76 & 1052.47 & 54.70 & 267.53\\
0.25 & 39.73 & 285.75 & 40.02 & 828.34 & 52.39 & 1047.31 & 52.36 & 271.87\\
0.30 & 36.59 & 289.81 & 36.81 & 837.23 & 48.69 & 899.38 & 48.74 & 264.41\\
\hline
\end{tabular}
\end{table}

\section{Conclusions}

In this paper, we propose a scalable and efficient method for the TRPCA problem. Considering that the heavy computational cost in the existing approaches are all stemming from the multiple SVDs conducted in each iteration, we split the unfoldings along each mode of the tensor into a columnwise orthonormal matrix (active subspace) and another small-size matrix. Such a transformation seems somewhat absurd, since it reformulate a convex optimization problem as a nonconvex one that is much more difficult to solve in general. But this reformulation indeed  allows us to replace the trace norm minimizations with large size by those involved some smaller-size matrices, and thus to reduce the computational complexity from $O(NI^{N+1})$ to $O(NRI^N)$ in each iteration. Therefore, our algorithm can scale pretty well to large-scale applications. The experiments show that our algorithm outperforms the state-of-the-art approaches in terms of both accuracy and efficiency. We expect that our PASD method can shed light on the development of new scalable algorithms for the problem of low-rank tensor recovery.

\bibliographystyle{plain}
\bibliography{2017_scalableTRPCA}

\begin{thebibliography}{10}

\bibitem{Acar2010}
Evrim Acar, Daniel~M. Dunlavy, Tamara~G. Kolda, and Morten M{\o}rup.
\newblock Scalable tensor factorizations with missing data.
\newblock In {\em SIAM International Conference on Data Mining}, pages
  701--712, 2010.

\bibitem{Boyd2011}
Stephen Boyd, Neal Parikh, Eric Chu, Borja Peleato, and Jonathan Eckstein.
\newblock Distributed optimization and statistical learning via the alternating
  direction method of multipliers.
\newblock {\em Foundations \& Trends in Machine Learning}, 3(1):1--122, 2011.

\bibitem{Cabral2013}
Ricardo Cabral, Fernando De~La Torre, Jo\~{a}o~P. Costeira, and Alexandre
  Bernardino.
\newblock Unifying nuclear norm and bilinear factorization approaches for
  low-rank matrix decomposition.
\newblock In {\em IEEE International Conference on Computer Vision}, pages
  2488--2495, 2013.

\bibitem{Candes2011}
Cand\`{e}s, Emmanuel~J. S, Xiaodong Li, Yi~Ma, and John Wright.
\newblock Robust principal component analysis?
\newblock {\em Journal of the ACM}, 58(3):1--73, 2011.

\bibitem{Cichocki2014}
Andrzej Cichocki, Danilo Mandic, Lieven~De Lathauwer, and Guoxu Zhou.
\newblock Tensor decompositions for signal processing applications: From
  two-way to multiway component analysis.
\newblock {\em IEEE Signal Processing Magazine}, 32(2):145--163, 2014.

\bibitem{Filipovi2013}
Marko Filipovi\'{c} and Ante Juki\'{c}.
\newblock Tucker factorization with missing data with application to
  low-\(n\)-rank tensor completion.
\newblock {\em Multidimensional Systems \& Signal Processing}, 26(3):1--16,
  2013.

\bibitem{Gandy2011}
Silvia Gandy, Benjamin Recht, and Isao Yamada.
\newblock Tensor completion and low-n-rank tensor recovery via convex
  optimization.
\newblock {\em Inverse Problems}, 27(2):25010--25028(19), 2011.

\bibitem{Goldfarb2013}
Donald Goldfarb and Zhiwei Qin.
\newblock Robust low-rank tensor recovery: Models and algorithms.
\newblock {\em SIAM Journal on Matrix Analysis \& Applications},
  35(1):225--253, 2013.

\bibitem{Harshman1969}
Richard~A. Harshman.
\newblock Foundations of the parafac procedure: Model and conditions for
  an''explanatory''multi-mode factor analysis.
\newblock In {\em UCLA Working Papers}, 1969.

\bibitem{Hastad2006}
Johan H{\aa}stad.
\newblock Tensor rank is np-complete.
\newblock {\em Journal of Algorithms}, 11(4):451--460, 2006.

\bibitem{Higham1995}
Nick Higham.
\newblock {\em Matrix procrustes problems}.
\newblock 1995.

\bibitem{Huang2015}
Bo~Huang, Cun Mu, Donald Goldfarb, and John Wright.
\newblock Provable models for robust low-rank tensor completion.
\newblock {\em Pacific Journal of Optimization}, 11(2):339--364, 2015.

\bibitem{Kolda2005}
Tamara~G. Kolda and Brett~W. Bader.
\newblock Tensor decompositions and applications.
\newblock {\em SIAM Review}, 66(4):294--310, 2005.

\bibitem{Li2008}
Xuelong Li, Stephen Lin, Shuicheng Yan, and Dong Xu.
\newblock Discriminant locally linear embedding with high-order tensor data.
\newblock {\em IEEE Transactions on Systems Man \& Cybernetics Part B
  Cybernetics}, 38(2):342--352, 2008.

\bibitem{Li2010}
Yin Li, Junchi Yan, Yue Zhou, and Jie Yang.
\newblock Optimum subspace learning and error correction for tensors.
\newblock In {\em European Conference on Computer Vision}, pages 790--803,
  2010.

\bibitem{Lin2009}
Zhouchen Lin, Minming Chen, and Yi~Ma.
\newblock The augmented lagrange multiplier method for exact recovery of
  corrupted low-rank matrices.
\newblock Technical report, Univ. Illinois, Urbana-Champaign, 2009.

\bibitem{Liu2012}
G.~Liu and S.~Yan.
\newblock Active subspace: toward scalable low-rank learning.
\newblock {\em Neural Computation}, 24(12):3371--3394, 2012.

\bibitem{Liu2013}
Ji~Liu, Przemyslaw Musialski, Peter Wonka, and Jieping Ye.
\newblock Tensor completion for estimating missing values in visual data.
\newblock {\em IEEE Transactions on Pattern Analysis \& Machine Intelligence},
  35(1):208--220, 2013.

\bibitem{Morup2011}
Morten M{\o}rup.
\newblock Applications of tensor (multiway array) factorizations and
  decompositions in data mining.
\newblock {\em Wiley Interdisciplinary Reviews Data Mining \& Knowledge
  Discovery}, 1(1):24--40, 2011.

\bibitem{Nocedal2006}
By~Jorge Nocedal and Stephen~J Wright.
\newblock {\em Numerical optimization}.
\newblock Springer-Verlag, 2006.

\bibitem{Okutomi2012}
M.~Okutomi, Shuicheng Yan, S.~Sugimoto, Guangcan Liu, and Yinqiang Zheng.
\newblock Practical low-rank matrix approximation under robust $\ell_1$-norm.
\newblock In {\em IEEE Conference on Computer Vision and Pattern Recognition},
  pages 1410--1417, 2012.

\bibitem{Signoretto2014}
Marco Signoretto, Quoc~Tran Dinh, Lieven~De Lathauwer, and Johan A.~K. Suykens.
\newblock Learning with tensors: a framework based on convex optimization and
  spectral regularization.
\newblock {\em Machine Learning}, 94(3):303--351, 2014.

\bibitem{Signoretto2010}
Marco Signoretto, Lieven~De Lathauwer, and Johan A~K Suykens.
\newblock Nuclear norms for tensors and their use for convex multilinear
  estimation.
\newblock 2010.

\bibitem{Sun2009}
Jimeng Sun, Spiros Papadimitriou, Ching~Yung Lin, Nan Cao, Shixia Liu, and
  Weihong Qian.
\newblock Multivis: Content-based social network exploration through multi-way
  visual analysis.
\newblock In {\em SIAM International Conference on Data Mining}, pages
  1064--1075, 2009.

\bibitem{Tan2013}
Huachun Tan, Bin Cheng, Jianshuai Feng, Guangdong Feng, Wuhong Wang, and Yu~Jin
  Zhang.
\newblock Low-n-rank tensor recovery based on multi-linear augmented lagrange
  multiplier method.
\newblock {\em Neurocomputing}, 119(16):144--152, 2013.

\bibitem{Tao2007}
Dacheng Tao, Xuelong Li, Xindong Wu, and Stephen~J. Maybank.
\newblock General tensor discriminant analysis and gabor features for gait
  recognition.
\newblock {\em IEEE Transactions on Pattern Analysis \& Machine Intelligence},
  29(10):1700--15, 2007.

\bibitem{Tucker1966}
Ledyard~R Tucker.
\newblock Some mathematical notes on three-mode factor analysis.
\newblock {\em Psychometrika}, 31(3):279--311, 1966.

\bibitem{Xu2015}
Yangyang Xu, Ruru Hao, Wotao Yin, and Zhixun Su.
\newblock Parallel matrix factorization for low-rank tensor completion.
\newblock {\em Inverse Problems \& Imaging}, 9(2), 2015.

\bibitem{Yang2013}
Lei Yang, Zheng~Hai Huang, and Xianjun Shi.
\newblock A fixed point iterative method for low n-rank tensor pursuit.
\newblock {\em IEEE Transactions on Signal Processing}, 61(11):2952--2962,
  2013.

\end{thebibliography}

\appendix

\section{Proof of Lemma \ref{lem1}}
\label{appA}

To prove the boundedness of the sequences, we first introduce the following lemma.
\begin{lemma}\textup{\cite{Lin2009}}
Let $\mathcal{H}$ be a real Hilbert space endowed with an inner product $\langle \cdot, \cdot \rangle$ and a corresponding norm $\|\cdot\|$, and $\bm{y} \in \partial\|\bm{x}\|$, where $\partial\|\cdot\|$ denotes the subgradient. Then $\|\bm{y}\|^{\ast} = 1$ if $\bm{x} \neq \bm{0}$, and $\|\bm{y}\|^{\ast} \leq 1$ if $\bm{x}=\bm{0}$, where $\|\cdot\|^{\ast}$ is the dual norm of the norm $\|\cdot\|$.
\label{lemmaS1}
\end{lemma}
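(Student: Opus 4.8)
The plan is to work directly from the defining inequality of the subgradient. By convexity of the norm, $\bm{y}\in\partial\|\bm{x}\|$ means that $\|\bm{z}\|\geq\|\bm{x}\|+\langle\bm{y},\bm{z}-\bm{x}\rangle$ for every $\bm{z}\in\mathcal{H}$, and the dual norm is $\|\bm{y}\|^{\ast}=\sup_{\|\bm{z}\|\leq 1}\langle\bm{y},\bm{z}\rangle$. I would split into the two cases at the outset, since the vanishing of $\bm{x}$ changes which conclusion is available.

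For $\bm{x}=\bm{0}$ the subgradient inequality collapses to $\langle\bm{y},\bm{z}\rangle\leq\|\bm{z}\|$ for all $\bm{z}$; restricting to $\|\bm{z}\|\leq 1$ and taking the supremum gives $\|\bm{y}\|^{\ast}\leq 1$ at once, which settles the second claim.

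For $\bm{x}\neq\bm{0}$ the key intermediate step will be the Euler-type identity $\langle\bm{y},\bm{x}\rangle=\|\bm{x}\|$, which I would extract from the positive homogeneity of the norm. Substituting $\bm{z}=t\bm{x}$ with $t>0$ into the subgradient inequality and using $\|t\bm{x}\|=t\|\bm{x}\|$ yields $(t-1)\|\bm{x}\|\geq(t-1)\langle\bm{y},\bm{x}\rangle$; letting $t$ range over $t>1$ and over $0<t<1$ forces the two opposite inequalities and hence the identity. With this in hand, substituting it back into the subgradient inequality cancels the constant terms and leaves $\langle\bm{y},\bm{z}\rangle\leq\|\bm{z}\|$ for all $\bm{z}$, so again $\|\bm{y}\|^{\ast}\leq 1$. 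For the matching lower bound I would test the dual norm against the unit vector $\bm{z}=\bm{x}/\|\bm{x}\|$: the identity gives $\langle\bm{y},\bm{z}\rangle=1$ with $\|\bm{z}\|=1$, whence $\|\bm{y}\|^{\ast}\geq 1$. Combining the two bounds yields $\|\bm{y}\|^{\ast}=1$, the first claim.

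The argument is standard convex analysis, so I do not anticipate a genuine obstacle; the only point needing a little care is the homogeneity step, where one must use both $t>1$ and $t<1$ so that the single inequality sharpens into the exact identity $\langle\bm{y},\bm{x}\rangle=\|\bm{x}\|$ that simultaneously drives the upper bound and, through the test vector $\bm{x}/\|\bm{x}\|$, the lower bound.
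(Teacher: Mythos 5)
Your proof is correct. Note that the paper itself offers no proof of this statement: it is imported verbatim from \cite{Lin2009} as a known fact, so there is no in-paper argument to compare against. Your derivation is the standard one --- the case $\bm{x}=\bm{0}$ reads the bound $\langle\bm{y},\bm{z}\rangle\leq\|\bm{z}\|$ directly off the subgradient inequality, and the case $\bm{x}\neq\bm{0}$ first establishes $\langle\bm{y},\bm{x}\rangle=\|\bm{x}\|$ by testing with $\bm{z}=t\bm{x}$ for $t$ on both sides of $1$, then uses that identity to get the upper bound $\|\bm{y}\|^{\ast}\leq 1$ and the test vector $\bm{x}/\|\bm{x}\|$ to get the matching lower bound. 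All steps are sound, and the care you flag about needing both $t>1$ and $0<t<1$ is exactly the right point to watch.
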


As mentioned in the paper, we propose an ADMM scheme to circularly minimize $\mathcal{L}_{\mu}$ with respect to $\{U_n\}$, $\{V_n\}$, $\ten{E}$ and update $\ten{Y}_n$ as follow.
\begin{align}
\label{eq5}\min_{\{U_n\}} & \,\, \mathcal{L}_{\mu^k}(U_1 \cdots U_N, V^k_1 \cdots V^k_N, \ten{E}^k, \ten{Y}^k_1 \cdots \ten{Y}^k_N) \\
& \textup{s.t.}, \,\,\,U_n \in \St(I_n, R_n), \nonumber\\
\label{eq6}\min_{\{V_n\}} & \,\, \mathcal{L}_{\mu^k}(U^{k+1}_1 \cdots U^{k+1}_N, V_1 \cdots V_N, \ten{E}^k, \ten{Y}^k_1 \cdots \ten{Y}^k_N), \\
\label{eq7}\min_{\ten{E}} & \,\, \mathcal{L}_{\mu^k}(U^{k+1}_1 \cdots U^{k+1}_N, V^{k+1}_1 \cdots V^{k+1}_N, \ten{E}, \ten{Y}^k_1 \cdots \ten{Y}^k_N),\\
\label{eq8} \ten{Y}^{k+1}_n &  = \ten{Y}^k_n + \mu^k(\ten{T} - \fold_n(U^{k+1}_n V^{k+1}_n) - \ten{E}^{k+1}).
\end{align}

\begin{proof}
For convenience, we denote $\ser{U}^k = (U^k_1, \cdots, U^k_N)$, $\ser{V}^k = (V^k_1, \cdots, V^k_N)$ and $\ser{Y}^k = (\ten{Y}^k_1, \cdots, \ten{Y}^k_N)$. The first order optimal condition of problem (\ref{eq7}) with respect to $\ten{E}^{k+1}$ is
\begin{equation}
\bm{0} \in \partial_{\ten{E}^{k+1}} \mathcal{L}_{\mu^k}(\ser{U}^{k+1}, \ser{V}^{k+1}, \ten{E}^{k+1}, \ser{Y}^k), \nonumber
\end{equation}
i.e.,
\begin{equation}
\sum_{n=1}^N \left(\ten{Y}^k_n + \mu^k(\ten{T} - \fold_n(U^{k+1}_n V^{k+1}_n) - \ten{E}^{k+1})\right) \in \partial \|\ten{E}^{k+1}\|_1, \nonumber
\end{equation}
which implies that
\begin{equation}
\sum_{n=1}^N \ten{Y}^{k+1}_n \in \partial \|\ten{E}^{k+1}\|_1.\nonumber
\end{equation}
By Lemma~\ref{lemmaS1}, we have
\begin{equation}
\Big\|\sum_{n=1}^N \ten{Y}^{k+1}_n\Big\|_{\infty} \leq 1.
\label{eqS1}
\end{equation}
Hence, the sequence $\{\ten{Y}^k_n\}, \forall n \in \mathds{N}$ are all bounded.

The optimality of $U^{k+1}_n$ directly leads us to that
\begin{align}
\|\bar{\ten{Y}}^{k+1}_n\|_F & = \|\ten{Y}^k_n + \mu^k(\ten{T} - \fold_n(U^{k+1}_n V^{k}_n) - \ten{E}^k)\|_F \nonumber\\
& \leq \|\ten{Y}^k_n + \mu^k(\ten{T} - \fold_n(U^{k}_n V^{k}_n) - \ten{E}^k)\|_F \nonumber\\
& = \|(1 + \rho) \ten{Y}^k_n - \rho \ten{Y}^{k-1}_n\|_F
\label{eqS2}
\end{align}
So $\{\bar{\ten{Y}^{k}_n}\}, \forall n \in \mathds{N}$ are bounded due to the boundedness of $\{\ten{Y}^k_n\}, \forall n \in \mathds{N}$.

The first-order optimal condition for problem (\ref{eq6}) with respect to $V^{k+1}_n$ is given by
\begin{equation}
\bm{0} \in \partial_{V^{k+1}_n} \mathcal{L}_{\mu^k} (\ser{U}^{k+1}, \ser{V}^{k+1}, \ten{E}^k, \ser{Y}^k), \nonumber
\end{equation}
which gives that
\begin{equation}
(U^{k+1}_n)^T \hat{\ten{Y}}^{k+1}_{n, (n)} \in \lambda_n \partial \|V^{k+1}_n\|_{\ast} \nonumber.
\end{equation}
By Lemma~\ref{lemmaS1}, we know that
\begin{equation}
\|(U^{k+1}_n)^T \hat{\ten{Y}}^{k+1}_{n, (n)}\| \leq \lambda_n,
\label{eqS3}
\end{equation}
and thus $\left\{(U^{k+1}_n)^T \hat{\ten{Y}}^{k+1}_{n, (n)}\right\}, \forall n \in \mathds{N}$ are bounded. Let $(U^{k+1}_n)^{\perp}$ denote the orthogonal complement of $U^{k+1}_n$, and we can easily check that
\begin{equation}
\left((U^{k+1}_n)^{\perp}\right)^T \hat{\ten{Y}}^{k+1}_{n, (n)}  = \left((U^{k+1}_n)^{\perp}\right)^T \bar{\ten{Y}}^{k+1}_{n, (n)} \nonumber
\end{equation}
which immediately implies
\begin{equation}
\left\|\left((U^{k+1}_n)^{\perp}\right)^T \hat{\ten{Y}}^{k+1}_{n, (n)}\right\|  = \left\|\left((U^{k+1}_n)^{\perp}\right)^T \bar{\ten{Y}}^{k+1}_{n, (n)}\right\| \leq \left\|\bar{\ten{Y}}^{k+1}_{n, (n)}\right\|.
\label{eqS4}
\end{equation}
Therefore, $\left\{\left((U^{k+1}_n)^{\perp}\right)^T \hat{\ten{Y}}^{k+1}_{n, (n)}\right\}, \forall n \in \mathds{N}$ are bounded. According to these two facts, $\{\hat{\ten{Y}}^{k}_n\}, \forall n \in \mathds{N}$ are bounded as well.

By the iteration procedure of Algorithm 1, we have
\begin{align}
\mathcal{L}_{\mu^k}(\ser{U}^{k+1}, \ser{V}^{k+1}, \ten{E}^{k+1}, \ser{Y}^k) & \leq \mathcal{L}_{\mu^k}(\ser{U}^{k+1}, \ser{V}^{k+1}, \ten{E}^k, \ser{Y}^k) \nonumber\\
& \leq \mathcal{L}_{\mu^k}(\ser{U}^{k+1}, \ser{V}^k, \ten{E}^k, \ser{Y}^k) \nonumber\\
& \leq \mathcal{L}_{\mu^k}(\ser{U}^k, \ser{V}^k, \ten{E}^k, \ser{Y}^k) \nonumber\\
& = \mathcal{L}_{\mu^{k-1}}(\ser{U}^k, \ser{V}^k, \ten{E}^k, \ser{Y}^{k-1}) + \sum_{n=1}^N \frac{\mu^{k-1} + \mu^k}{2(\mu^{k-1})^2} \|\ten{Y}^k_n - \ten{Y}^{k-1}_n\|^2_F.
\label{eqS5}
\end{align}
Note that $\mu^k = \rho \mu^{k-1}$ and we have
\begin{equation}
\sum_{k=1}^{\infty} \frac{\mu^{k-1} + \mu^k}{2(\mu^{k-1})^2}  = \frac{\rho(\rho + 1)}{2\mu^0(\rho - 1)} < \infty. \nonumber
\end{equation}
Hence, $\left\{\mathcal{L}_{\mu^k}(\ser{U}^k, \ser{V}^k, \ten{E}^k, \ser{Y}^{k-1})\right\}$ is upper bounded due to boundness of $\{\ten{Y}^k_n\}, \forall n \in \mathds{N}$. Then,
\begin{equation}
\sum_{n=1}^N \lambda_n \|V^k_n\|_{\ast} + \lambda \|\ten{E}^k\|_1 = \mathcal{L}_{\mu^k}(\ser{U}^k, \ser{V}^k, \ten{E}^k, \ser{Y}^k) - \frac{1}{2\mu^{k-1}} \sum_{n=1}^N \left(\|\ten{Y}^k_n\| - \|\ten{Y}^{k-1}_n\|\right)
\label{eqS6}
\end{equation}
is also upper bounded, which means that $\{V^k_n\}, \forall n \in \mathds{N}$ and $\{\ten{E}^k\}$ are bounded. Since $\|U^k_n V^k_n\|_{\ast} = \|V^k_n\|_{\ast}$, $\{U^k_n V^k_n\}, \forall n \in \mathds{N}$ are bounded as well.
\end{proof}

\section{Proof of Theorem \ref{the1}}
\label{appB}

\begin{proof}
(I) The boundedness of $\ten{Y}_n^k$, $\hat{\ten{Y}}^k_n$ and $\bar{\ten{Y}}^k_n$ and the fact $\lim\limits_{k \rightarrow \infty} \mu^k \rightarrow \infty$ imply that
\begin{equation}
\frac{\ten{Y}^{k+1}_n - \ten{Y}^k_n}{\mu^k} \rightarrow 0, \,\,\,\frac{\hat{\ten{Y}}^{k+1}_n - \ten{Y}^k_n}{\mu^k} \rightarrow 0, \,\,\, \frac{\bar{\ten{Y}}^{k+1}_n - \hat{\ten{Y}}^k_n}{\mu^k} \rightarrow 0, \,\,\, \forall n \in \mathds{N} \nonumber
\end{equation}
By the definitions of $\{\ten{Y}^k_n\}$, $\{\hat{\ten{Y}}^k_n\}$ and $\{\bar{\ten{Y}}^k_n\}$, we have that
\begin{align}
\ten{E}^{k+1} - \ten{E}^k & = \frac{\hat{\ten{Y}}^{k+1}_n - \ten{Y}^{k+1}_n}{\mu^k}, \,\,\, \forall n \in \mathds{N}, \nonumber\\
V^{k+1}_n - V^k_n & = \frac{(U^{k+1}_n)^T\left(\bar{\ten{Y}}^{k+1}_{n, (n)} - \hat{\ten{Y}}^{k+1}_{n, (n)}\right)}{\mu^k}, \,\,\, \forall n \in \mathds{N}, \nonumber\\
U^{k+1}_n V^{k+1}_n - U^k_n V^k_n & = \frac{(1+\rho)\ten{Y}^k_{n, (n)} - \left(\hat{\ten{Y}}^{k+1}_n + \rho\ten{Y}^{k+1}_{n, (n)}\right)}{\mu^k}, \,\,\, \forall n \in \mathds{N}. \nonumber
\end{align}
Therefore, the sequences $\{V^k_n\}, \{U^k_n V^k_n\}, \forall n \in \mathds{N}$ and $\{\ten{E}^k\}$ are Cauchy sequences.
$\\\\$
(II) It is easy to check that
\begin{equation}
\ten{T}  - \fold_n(U^{k+1}_n V^{k+1}_n) - \ten{E}^{k+1} = \frac{\ten{Y}^{k+1}_n - \ten{Y}^k_n}{\mu^k}.
\end{equation}
By the boundness of $\{\ten{Y}^k_n\}, \forall n \in \mathds{N}$ and $\lim\limits_{k\rightarrow\infty} \mu^k \rightarrow \infty$, we have that
\begin{equation}
\lim_{k \rightarrow \infty} \ten{T} - \fold_n(U^{k+1}_n V^{k+1}_n) - \ten{E}^{k+1} \rightarrow 0,
\end{equation}
and thus $(U^k_n, V^k_n, \ten{E}^k), \forall n \in \mathds{N}$ approaches to a feasible solution.
\end{proof}

\section{Proof of Lemma 2}
\label{appC}

To prove Lemma 2, we need to introduce the following lemma.
\begin{lemma}~\textup{\cite{Liu2012}}
Let $X$, $Y$ and $Q$ be matrices of compatible dimensions. If $Q$ obeys $Q^T Q = I$ and $Y \in \partial \|X\|_{\ast}$, then $QY \in \partial \|X\|_{\ast}$.
\label{lemS2}
\end{lemma}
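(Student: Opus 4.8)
The plan is to prove the lemma through the classical variational (SVD-based) characterization of the subdifferential of the nuclear norm, the only ingredient being the orthonormality condition $Q^\top Q = I$. One preliminary clarification is unavoidable, however. Since $Q$ is merely columnwise orthonormal and $Y$ inherits the dimensions of $X$, the product $QY$ no longer lives in the same matrix space as $X$; for $QY \in \partial\|\cdot\|_{\ast}$ to typecheck, the matrix inside the norm on the right must be $QX$, i.e.\ the conclusion reads $QY \in \partial\|QX\|_{\ast}$. This is also exactly the form in which the lemma is invoked in the proof of Lemma~\ref{lem2}: with $Q = U^{\ast}_n$ and $X = V^{\ast}_n$ it upgrades the optimality relation $(U^{\ast}_n)^\top \hat{\ten{Y}}^{\ast}_{n,(n)} \in \lambda_n \partial\|V^{\ast}_n\|_{\ast}$ into a subgradient statement for $\|U^{\ast}_n V^{\ast}_n\|_{\ast}$. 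I will therefore establish $QY \in \partial\|QX\|_{\ast}$.

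First I would recall the characterization in the form I need: writing a compact SVD $X = U_X \Sigma V_X^\top$, one has $Y \in \partial\|X\|_{\ast}$ if and only if $Y = U_X V_X^\top + W$ with $U_X^\top W = 0$, $W V_X = 0$, and $\|W\|_2 \le 1$ (spectral norm). The first substantive step is to transfer the SVD of $X$ to $QX$. Because $Q^\top Q = I$, the matrix $Q U_X$ again has orthonormal columns, $(Q U_X)^\top (Q U_X) = U_X^\top Q^\top Q U_X = U_X^\top U_X = I$, so $QX = (Q U_X)\,\Sigma\, V_X^\top$ is a valid compact SVD of $QX$: same singular values $\Sigma$, same right singular vectors $V_X$, left singular vectors $Q U_X$.

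The second step feeds the decomposition of $Y$ through $Q$. From $Y = U_X V_X^\top + W$ we get $QY = (Q U_X) V_X^\top + Q W$, so it suffices to check that $QW$ is an admissible residual for $QX$. Using $Q^\top Q = I$ once more, $(Q U_X)^\top (Q W) = U_X^\top W = 0$ and $(Q W) V_X = Q(W V_X) = 0$, so $QW$ vanishes on the correct row and column spaces; and since $Q$ has orthonormal columns its spectral norm is $1$, whence $\|QW\|_2 \le \|Q\|_2\,\|W\|_2 \le 1$. These are precisely the three admissibility conditions for $\partial\|QX\|_{\ast}$, giving $QY = (Q U_X) V_X^\top + QW \in \partial\|QX\|_{\ast}$.

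The step I expect to be the main obstacle is the last one: keeping both orthogonality relations and the spectral-norm bound intact under left multiplication by $Q$. All three hinge specifically on $Q^\top Q = I$ and not on $Q$ being square or invertible; in particular $QQ^\top$ need not be the identity, so one cannot argue by a naive change of variables and must verify each condition directly, as above. As a cross-check I would also mention the dual route: $Y \in \partial\|X\|_{\ast}$ iff $\|Y\|_2 \le 1$ and $\langle Y, X\rangle = \|X\|_{\ast}$; then $\|QY\|_2 \le \|Q\|_2\,\|Y\|_2 \le 1$ and $\langle QY, QX\rangle = \langle Y, Q^\top Q X\rangle = \langle Y, X\rangle = \|X\|_{\ast} = \|QX\|_{\ast}$, where the last equality uses that $Q$ preserves singular values, yielding $QY \in \partial\|QX\|_{\ast}$ immediately. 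I would present the SVD argument as primary and this as confirmation.
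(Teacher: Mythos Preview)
The paper does not prove this lemma at all; it is quoted from \cite{Liu2012} and used as a black box in the proof of Lemma~\ref{lem2}. So there is nothing in the paper to compare your argument against line by line. That said, two remarks are in order.

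First, your reading of the statement is correct and worth emphasizing. As printed, the conclusion $QY\in\partial\|X\|_\ast$ is a typo: when $Q$ is genuinely tall the dimensions do not even match, and when $Q$ is square orthogonal the claim is still false (take $X=I$, $Y=I$, $Q\neq I$). The intended conclusion is $QY\in\partial\|QX\|_\ast$, and this is precisely how the paper invokes the lemma a few lines later, passing from $(U^{k+1}_n)^\top\hat{\ten{Y}}^{k+1}_{n,(n)}\in\lambda_n\partial\|V^{k+1}_n\|_\ast$ to $U^{k+1}_n(U^{k+1}_n)^\top\hat{\ten{Y}}^{k+1}_{n,(n)}\in\lambda_n\partial\|U^{k+1}_n V^{k+1}_n\|_\ast$ with $Q=U^{k+1}_n$ and $X=V^{k+1}_n$.

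Second, your proof of the corrected statement is complete and clean. The SVD-based argument is the standard one: $QU_X$ inherits orthonormal columns from $U_X$ via $Q^\top Q=I$, so $(QU_X)\Sigma V_X^\top$ is a compact SVD of $QX$, and the three residual conditions on $QW$ follow immediately. Your alternative via the dual characterization ($\|Y\|_2\le1$ and $\langle Y,X\rangle=\|X\|_\ast$) is arguably even shorter and makes a good sanity check. Either route would serve as a self-contained replacement for the citation.
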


\begin{proof}\textbf{of Lemma 2} Let the skinny SVD of $G^k_n$ be $G^k_n = \hat{U}^k_n \hat{\Sigma}^k_n (\hat{V}^k_n)^T$, then it can be computed that
\begin{equation}
U^{k+1}_n = \hat{U}^k_n (\hat{V}^k_n)^T (V^k_n)^T. \nonumber
\end{equation}
Let the full SVD of $\hat{\Sigma}^k_n (\hat{V}^k_n)^T(V^k_n)^T$ be $\hat{\Sigma}^k_n (\hat{V}^k_n)^T(V^k_n)^T = \tilde{U}^k_n \tilde{\Sigma}^k_n (\tilde{V}^k_n)^T$. Note that $\tilde{U}^k_n$ and $\tilde{V}^k_n$ are orthogonal matrices, then we have that
\begin{equation}
U^{k+1}_n = \hat{U}^k_n \tilde{U}^k_n (\tilde{V}^k_n)^T, \nonumber
\end{equation}
which simply implies that
\begin{equation}
U^{k+1}_n (U^{k+1}_n)^T = \hat{U}^k_n \tilde{U}^k_n (\tilde{V}^k_n)^T \tilde{V}^k_n (\tilde{U}^k_n)^T (\hat{U}^k_n)^T = \hat{U}^k_n (\hat{U}^k_n)^T.\nonumber
\end{equation}
Hence,
\begin{align}
\hat{\ten{Y}}^{k+1}_{n, (n)} & = \mu^k\left(\Big(\ten{T}_{(n)} - \ten{E}^{k}_{(n)} +\frac{\ten{Y}^k_{n, (n)}}{\mu^k}\Big) - U^{k+1}_n (U^{k+1}_n)^T\Big(\ten{T}_{(n)} - \ten{E}^{k}_{(n)} +\frac{\ten{Y}^k_{n, (n)}}{\mu^k}\Big)\right) \nonumber\\
& = \mu^k (\hat{U}^{k}_n \hat{\Sigma}^{k}_n (\hat{V}^k_n)^T - U^{k+1}_n (U^{k+1}_n)^T\hat{U}^{k}_n \hat{\Sigma}^{k}_n (\hat{V}^k_n)^T) \nonumber\\
& = \mu^k (\hat{U}^{k}_n \hat{\Sigma}^{k}_n (\hat{V}^k_n)^T - \hat{U}^{k}_n (\hat{U}^{k}_n)^T\hat{U}^{k}_n \hat{\Sigma}^{k}_n (\hat{V}^k_n)^T) \nonumber\\
& = \mu^k (\hat{U}^{k}_n \hat{\Sigma}^{k}_n (\hat{V}^k_n)^T - \hat{U}^{k}_n \hat{\Sigma}^{k}_n (\hat{V}^k_n)^T) = 0, \nonumber
\end{align}
i.e.,
\begin{equation}
\hat{\ten{Y}}^{k+1}_{n, (n)} = U^{k+1}_n (U^{k+1}_n)^T \hat{\ten{Y}}^{k+1}_{n, (n)}.
\label{eqS11}
\end{equation}

According to (\ref{eqS3}) and Lemma~\ref{lemS2}, we have
\begin{equation}
U^{k+1}_n (U^{k+1}_n)^T \hat{\ten{Y}}^{k+1}_{n, (n)} \in \lambda_n\partial \|U^{k+1}_n V^{k+1}_n\|_{\ast}, \,\,\, \textup{and thus}\,\,\, \hat{\ten{Y}}^{k+1}_{n, (n)} \in \lambda_n\partial\|U^{k+1}_n V^{k+1}_n\|_{\ast}.
\label{eqS12}
\end{equation}
Since (\ref{eqS1}) and (\ref{eqS12}) hold for any $k$, they naturally hold at $(\{U^{\ast}\}, \{V^{\ast}\}, \ten{E}^{\ast})$
\begin{equation}
\hat{\ten{Y}}^{\ast}_{n, (n)} \in \lambda_n\|U^{\ast}_n V^{\ast}_n\|_{\ast}, \quad \sum_{n=1}^N \ten{Y}^{\ast}_n \in  \partial\|\ten{E}^{\ast}\|_1. \label{eqS13}
\end{equation}

Given any feasible solution $(\{U_n\}, \{V_n\}, \ten{E})$ to problem~(3), by the convexity of nuclear norm and $\ell_1$ norm, we have that
\begin{align}
&\sum_{n=1}^N \lambda_n \|V_n\|_{\ast} + \|\ten{E}\|_1  = \sum_{n=1}^N \lambda_n \|U_n V_n\|_{\ast} + \|\ten{E}\|_1 \nonumber\\
& \geq \sum_{n=1}^N \left(\lambda_n \|U^{\ast}_n V^{\ast}_n\|_{\ast} + \langle \hat{\ten{Y}}^{\ast}_{n, (n)}, U_n V_n - U^{\ast}_n V^{\ast}_n \rangle\right) + \|\ten{E}^{\ast}\|_1 + \langle \sum_{n=1}^N \ten{Y}^{\ast}_n, \ten{E} - \ten{E}^{\ast} \rangle \nonumber\\
& = \sum_{n=1}^N \lambda_n \|U^{\ast}_n V^{\ast}_n\|_{\ast} + \|\ten{E}^{\ast}\|_1  + \sum_{n=1}^N \langle \ten{Y}^{\ast}_n - \hat{\ten{Y}}^{\ast}_n, \ten{E} - \ten{E}^{\ast}\rangle + \sum_{n=1}^N \langle \hat{\ten{Y}}^{\ast}_{n}, \fold_n(U_n V_n) + \ten{E} - \fold_n(U^{\ast}_n V^{\ast}_n) - \ten{E}^{\ast}\rangle \nonumber
\end{align}
By Theorem 1, we have that
\begin{equation}
\|\fold_n(U_n V_n) + \ten{E} - \fold_n(U^{\ast}_n V^{\ast}_n) - \ten{E}^{\ast}\|_{\infty} \leq \|\ten{T} - \fold_n(U^{\ast}_n V^{\ast}_n) - \ten{E}^{\ast}\|_{\infty} \leq \varepsilon, \nonumber
\end{equation}
which directly leads to
\begin{align}
&|\langle \hat{\ten{Y}}^{\ast}_n,  \fold_n(U_n V_n) + \ten{E} - \fold_n(U^{\ast}_n V^{\ast}_n) - \ten{E}^{\ast}\rangle| \nonumber\\
\leq & \|\hat{\ten{Y}}^{\ast}_n\|_{\infty} \|\fold_n(U_n V_n) + \ten{E} - \fold_n(U^{\ast}_n V^{\ast}_n) - \ten{E}^{\ast}\|_1 \nonumber\\
= & \|\hat{\ten{Y}}^{\ast}_{n, (n)}\|_{\infty} \|\fold_n(U_n V_n) + \ten{E} - \fold_n(U^{\ast}_n V^{\ast}_n) - \ten{E}^{\ast}\|_1 \nonumber\\
\leq & \|\hat{\ten{Y}}^{\ast}_{n, (n)}\| \|\fold_n(U_n V_n) + \ten{E} - \fold_n(U^{\ast}_n V^{\ast}_n) - \ten{E}^{\ast}\|_{\infty} \nonumber\\
\leq & \lambda_n I_n \prod_{m \neq n} I_m \varepsilon \nonumber
\end{align}
where $\|\hat{\ten{Y}}^{\ast}_{n, (n)}\| \leq \lambda_n$ is due to (\ref{eqS13}). Hence, we complete the proof.
\end{proof}

\section{Proof of Theorem 2}
\label{appD}

\begin{proof}
Note that $(\{U_n\} = \bm{0}, \{V_n\} = \bm{0}, \ten{E} = \ten{T})$ is feasible to (3) and let $(\{U^g_n\}, \{V^g_n\}, \ten{E}^g)$ be a globally optimal solution to (3), then we have
\begin{equation}
\|\ten{E}^g\|_1 \leq \sum_{n=1}^N \lambda_n \|V^g_n\|_{\ast} + \|\ten{E}^g\|_1 \leq  \|\ten{T}\|_1. \nonumber
\end{equation}
By the proof of Lemma 1, we have that $\ten{Y}_n^{\ast} = \ten{Y}_m^{\ast}, \forall n, m \in \mathds{N}, n \neq m$ almost surely since $\ten{Y}_n^{0} = \ten{Y}_m^{0} = \bm{0}$. Recall that $\|\sum\limits_{n=1}^N \ten{Y}_n^{k^{\ast}}\|_{\infty} \leq 1$, and we have that  $\|\ten{Y}_n^{k^{\ast}}\|_{\infty} \leq \frac{1}{N}, \forall n \in \mathds{N}$. So
\begin{equation}
\|\ten{Y}_n ^{k^{\ast}}\|_F \nonumber = \|\ten{Y}_{n, (n)} ^{k^{\ast}}\|_F \leq \sqrt{I_n \prod_{m \neq n}I_m} \|\ten{Y}_{n, (n)} ^{k^{\ast}}\|_{\infty} = \sqrt{I_n \prod_{m \neq n}I_m} \|\ten{Y}_{n} ^{k^{\ast}}\|_{\infty} \leq \sqrt{I_n \prod_{m \neq n}I_m} \frac{1}{N}
\end{equation}
holds and thus $\ten{E}^{\ast}$ is bounded by
\begin{align}
\|\ten{E}^{\ast}\|_1 & \leq \sum_{n=1}^N \lambda_n \|V^{\ast}_n\|_{\ast} + \|\ten{E}^{\ast}\|_1 \nonumber\\
& \leq \mathcal{L}_{\mu^{k^\ast}} (\ser{U}^{k^\ast+1}, \ser{V}^{k^\ast+1}, \ten{E}^{k^\ast+1}, \ser{Y}^{k^\ast+1}) + \sum_{n=1}^N \frac{\|\ten{Y}^{k^\ast}_n\|^2_F}{2\mu^{k^\ast}} \nonumber\\
& \leq \sum_{n=1}^N \frac{1}{\mu^0 N^2} I_n \prod_{m \neq n} I_m \left(\frac{\rho(1+\rho)}{\rho -1} + \frac{1}{2\rho^{k^\ast}}\right).
\end{align}
Hence, $\|\ten{E}^g - \ten{E}^{\ast}\|_1 \leq \|\ten{E}^g\|_1 + \|\ten{E}^{\ast}\|_1 \leq c$. By Lemma 2, we have
\begin{align}
f^g  = \sum_{n=1}^N \lambda_n \|V^g_n\|_{\ast} + \|\ten{E}^g\|_1 & \geq \sum_{n=1}^N \lambda_n \|V^{\ast}_n\|_{\ast} +  \|\ten{E}\|_1 + \sum_{n=1}^N \langle \ten{Y}^{\ast}_n - \hat{\ten{Y}}^{\ast}_n, \ten{E}^g - \ten{E}^{\ast} \rangle \nonumber\\
& \geq f^{\ast} - \|\sum_{n=1}^N \ten{Y}^{\ast}_n - \sum_{n=1}^N \hat{\ten{Y}}^{\ast}_n\|_{\infty} \|\ten{E}^g - \ten{E}^{\ast}\|_1 - \sum_{n=1}^N \lambda_n I_n \prod_{m \neq n} I_m \varepsilon \nonumber\\
& \geq f^{\ast} - c\epsilon - \sum_{n=1}^N \lambda_n I_n \prod_{m \neq n} I_m \varepsilon  \nonumber
\end{align}
which complete the proof.
\end{proof}

\section{Proof of Theorem 3}
\label{appE}

\begin{proof}
By the convexity of problem (1) and the optimality of $(\ten{X}^0, \ten{E}^0)$, it naturally follows that $f^0 \leq f^{\ast}$. Let $\ten{X}^0_{(n)} = U^0_n \Sigma^0_n (V^0_n)^T$ be the skinny SVD of mode-$n$ unfolding $\ten{X}^0_{(n)}$. Constructing $U'_n = U^0_n$, $V'_n = \Sigma^0_n (V^0_n)^T$ and $\ten{E}' = \ten{E}^0$, we have the following equality when $R_n \geq r_n$,
\begin{equation}
\ten{T} = \ten{X}^0 + \ten{E}^0 = \fold_n(U^0_n\Sigma^0_n(V^0_n)^T) + \ten{E}^0 = \fold_n(U'_n V'_n) + \ten{E}'
\end{equation}
i.e., $(\{U'_n\}, \{V'_n\}, \ten{E}')$ is a feasible solution to problem (3). By Theorem 2, we can conclude that
\begin{equation}
f^{\ast} - c\epsilon - \sum_{n=1}^N \lambda_n I_n \prod_{m \neq n} I_m \leq f^0. \nonumber
\end{equation}
For $R_n < r_n$, we decompose the skinny SVD of $\ten{X}^0_{(n)}$ as
\begin{equation}
\ten{X}^0_{(n)} = U^1_n \Sigma^1_n (V^1_n)^T + U^2_n \Sigma^2_n (V^2_n)^T, \nonumber
\end{equation}
where $U^1_n$ and $V^1_n$ (resp. $U^2_n$ and $V^2_n$) are the singular vectors associated with the $R_n$ largest singular values (resp. the rest singular values smaller than or equal to $\sigma^{R_n}_{(n)}$). With these notations, we have a feasible solution to problem (3) by constructing
\begin{equation}
U''_n  = U^1_n, \,\,\, V''_n = \Sigma^1_n (V^1_n)^T, \,\,\, \ten{E}'' = \ten{E}^0 + \fold_n(U^2_n \Sigma^2_n (V^2_n)^T). \nonumber
\end{equation}
By Theorem 2, we have that
\begin{align}
f^{\ast} - c\epsilon - \sum_{n=1}^N \lambda_n I_n \prod_{m \neq n} I_m \leq  f^g & \leq \sum_{n=1}^N \|V''_n\|^2_{\ast} +  \|\ten{E}''\|_1 \nonumber\\
& = \sum_{n=1}^N \lambda_n \Big(\|\Sigma^1_n (V^1_n)^T\|_{\ast} +  \|\ten{E}^0 + \fold_n(U^2_n \Sigma^2_n (V^2_n)^T)\|_1\Big) \nonumber\\
& \leq \sum_{n=1}^N \lambda_n \Big(\|\ten{X}^0_{(n)}\|_{\ast} - \|\Sigma^2_n\|_{\ast} + \|\ten{E}^0 + \fold_n(U^2_n \Sigma^2_n (V^2_n)^T)\|_1\Big) \nonumber\\
& \leq f^0 + \sum_{n=1}^N \lambda_n \left(\|U^2_n \Sigma^2_n (V^2_n)^T\|_1 -\|\Sigma^2_n\|_{\ast}\right) \nonumber\\
& \leq f^0 + \sum_{n=1}^N \lambda_n \left(\sqrt{I_n \prod_{n \neq m} I_m}\|U^2_n \Sigma^2_n (V^2_n)^T\|_{\ast} - \|\Sigma^2_n\|_{\ast}\right) \nonumber\\
& = f^0 + \sum_{n=1}^N \lambda_n \left(\sqrt{I_n \prod_{n \neq m} I_m} - 1\right)\|\Sigma^2_n\|_{\ast} \nonumber\\
& \leq f^0 + \sum_{n=1}^N \lambda_n \left(\sqrt{I_n \prod_{n \neq m} I_m} - 1\right)\sigma^{R_n + 1}_{(n)} (r_n - R_n), \nonumber
\end{align}
which complete the proof.
\end{proof}

\end{document}